\title[Asymptotic analysis for a kinetic-fluid system]{Asymptotic analysis for Vlasov-Fokker-Planck/compressible Navier-Stokes equations with a density-dependent viscosity}
\author[Choi]{Young-Pil Choi}
\address[Young-Pil Choi]{\newline Department of Mathematics and Institute of Applied Mathematics \newline Inha University, Incheon 22212, Korea (Republic of)}
\email{ypchoi@inha.ac.kr}
\author[Jung]{Jinwook Jung}
\address[Jinwook Jung]{\newline Department of Mathematical Sciences \newline Seoul National University, Seoul  08826, Korea (Republic of)}
\email{warp100@snu.ac.kr}
\newtheorem{theorem}{Theorem}[section]
\newtheorem{lemma}{Lemma}[section]
\newtheorem{remark}{Remark}[section]
\newtheorem{definition}{Definition}[section]
\newcommand{\bbr}{\mathbb R}
\newcommand{\D}{\mathbb D}
\newcommand{\R}{\mathbb R}
\newcommand{\e}{\varepsilon}
\newcommand{\lt}{\left}
\newcommand{\rt}{\right}
\newcommand{\mc}{\mathcal{C}}
\newcommand{\bq}{\begin{equation}}
\newcommand{\eq}{\end{equation}}
\def\charf {\mbox{{\text 1}\kern-.30em {\text l}}}
\begin{document}
%%%%%%%%%%%%%%%%

\date{\today}

%\subjclass{15B48, 92D25}
 \keywords{Vlasov/Navier-Stokes equations, asymptotic analysis, hydrodynamic limit, two-phase fluid system, relative entropy method. }
%
%
%\thanks{\textbf{Acknowledgment.} }
%
\begin{abstract}
We study a hydrodynamic limit of a system of coupled kinetic and fluid equations under a strong local alignment force and  a strong Brownian motion. More precisely, we consider the Vlasov-Fokker-Planck type equation and compressible Navier-Stokes equations with a density-dependent viscosity. Based on a relative entropy argument, by assuming the existence of weak solutions to that kinetic-fluid system, we rigorously derive a two-phase fluid model consisting of isothermal Euler equations and compressible Navier-Stokes equations with a density-dependent viscosity. 
\end{abstract}
\maketitle \centerline{\date}

\allowdisplaybreaks

%\tableofcontents

\section{Introduction}
\setcounter{equation}{0}
The present work is devoted to the asymptotic analysis of a system of kinetic-fluid equations, namely Vlasov-Fokker-Planck equation with a local alignment force coupled with compressible Navier-Stokes equations through the drag force. This system describes the time evolution of dispersed particles immersed in a compressible fluid, in which particles interact with each other via local alignment forces, and particles and fluid are interacting through the drag force. To be more precise,  let $f = f(x,\xi,t)$ be the number density function on the phase point $(x,\xi) \in \R^d \times \R^d$ at time $t \in \R_+$, and $n = n(x,t)$ and $v = v(x,t)$ be the local mass density and the bulk velocity of the compressible fluid, respectively. Then, our main system is governed by 
\begin{align}
\begin{aligned}\label{A-1}
&\partial_t f + \xi \cdot \nabla_x f + \nabla_\xi \cdot ((v-\xi)f) = \nabla_\xi \cdot (\nabla_\xi f - (u-\xi)f), \quad (x,\xi) \in \R^d \times \R^d, \quad t >0,\\
&\partial_t n + \nabla_x \cdot (nv) = 0,\\
&\partial_t (nv) + \nabla_x \cdot (nv \otimes v) + \nabla_x p - 2\nabla_x \cdot (\nu(n) \D v) = -\int_{\R^d} (v - \xi)f\,d\xi, \\
\end{aligned}
\end{align}
subject to initial data:
\[
f(x,\xi,0) = f_0(x,\xi), \quad n(x,0) = n_0(x), \quad v(x,0) = v_0(x), \qquad (x,\xi) \in \R^d \times \R^d,
\]
and the boundary conditions:
\[
f(x,\xi,t) \to 0, \quad n(x,t) \to n_\infty \in \R_+, \quad \mbox{and} \quad v(x,t) \to 0,
\]
sufficiently fast as $|x|$, $|\xi| \to \infty$, where $\D v$ is the deformation tensor given by $\D v := (\nabla_x v + (\nabla_x v)^t) / 2$, $\nu$ is the viscosity coefficient which is a function of the fluid density $n$, $p=p(n) := n^\gamma$ $(\gamma > 0)$ is the pressure law, and $\rho$ and $u$ denote the average local densitiy and velocity of $f$, respectively:
\[ 
\rho(x,t) := \int_{\bbr^d} f(x,\xi,t)\, d\xi \quad \mbox{and} \quad (\rho u)(x,t) := \int_{\bbr^d} \xi f(x,\xi,t)\, d\xi. 
\]
Those types of kinetic-fluid systems have been extensively studied. The global well-posedness of weak and strong solutions for the Vlasov-type kinetic equations coupled with the incompressible Navier-Stokes equations are discussed in \cite{B-C-H-K2, B-D-G-M, C-C-K, C-D-M, CHJKpre,  G-H-M-Z, WY14, Yu13} and coupled with compressible Navier-Stokes \cite{B-C-H-K1, C-G, CHJKpre2, M-V, M-V2}. The local-in-time existence of classical solutions for the Vlasov-Boltzmann/compressible Euler equations is obtained in \cite{Math10}, and more recently, the global-in-time existence of weak solutions for the BGK/incompressible Navier-Stokes is also discussed in \cite{CYpre}. We refer to \cite{C3} and \cite{C4} for a priori estimate of large-time behavior of solutions and the finite-time blow-up phenomena of Vlasov-type/Navier-Stokes equations, respectively. 

In the current work, we are interested in the asymptotic regime corresponding to a strong drag force and a strong Brownian motion. More specifically, we consider the following system:
\begin{align}
\begin{aligned}\label{B-1}
&\partial_t f^\varepsilon + \xi \cdot \nabla_x f^\varepsilon + \nabla_\xi \cdot ((v^\varepsilon -\xi)f^\varepsilon) = \frac{1}{\varepsilon} \nabla_\xi \cdot (\nabla_\xi f^\varepsilon - (u^\varepsilon - \xi)f^\varepsilon),\\
&\partial_t n^\varepsilon + \nabla_x \cdot (n^\e  v^\varepsilon)=0,\\
&\partial_t (n^\e  v^\varepsilon) + \nabla_x \cdot (n^\e v^\varepsilon \otimes v^\varepsilon) + \nabla_x p(n^\e)  - 2\nabla_x \cdot (\nu(n^\e) \D v^\e) = - \int_{\bbr^d} (v^\varepsilon -\xi)f^\varepsilon d\xi,
\end{aligned}
\end{align}
where
\[ 
\rho^\e(x,t) := \int_{\bbr^d} f^\e(x,\xi,t)\, d\xi \quad \mbox{and} \quad (\rho^\e u^\e)(x,t) := \int_{\bbr^d} \xi f^\e(x,\xi,t)\, d\xi. 
\]
Here, since we are concerned with unbounded domain, we assumed the far-field behavior $n^\e \to n_\infty$ as $|x| \to \infty$ for all $\e \geq 0$. Note that the global-in-time strong solutions to the kinetic equation in \eqref{B-1} around the global Maxwellian is studied in \cite{C2} and the global-in-time existence of weak solutions to the Vlasov-Fokker-Planck/compressible Navier-Stokes equations with a constant viscosity coefficient in a bounded domain is established in \cite{M-V}. Our main purpose is to investigate the convergence of weak solutions $(f^\e, n^\e, v^\e)$ of the above system \eqref{B-1} to the strong solutions $(\rho, u, n, v)$ to the following system of fluid equations:
\begin{align}
\begin{aligned}\label{B-2}
&\partial_t \rho + \nabla_x \cdot (\rho u) = 0,\\
&\partial_t (\rho u) + \nabla_x \cdot (\rho u \otimes u) + \nabla_x \rho = \rho(v-u),\\
&\partial_t n + \nabla_x \cdot (nv) = 0,\\
&\partial_t (nv) + \nabla_x \cdot (nv \otimes v) + \nabla_x p(n) - 2\nabla_x \cdot (\nu(n) \D v)  = -\rho(v-u).
\end{aligned}
\end{align}
The hydrodynamic limit of kinetic equation appeared in \eqref{B-1} coupled with the incompressible Navier-Stokes equations is addressed in \cite{C-C-K} based on the relative entropy method which relies on the ``weak-strong'' uniqueness principle \cite{Da}. The hydrodynamic limit found in \cite{C-C-K} holds as long as there exists a unique strong solution to the limiting system, which is a system of Euler/incompressible Navier-Stokes equations. Later, in \cite{C0}, the global-in-time existence and uniqueness of strong solutions to that limiting system is obtained. We refer to \cite{G-J-V, G-J-V2, M-V2} for other kind of hydrodynamical limits.

Our main strategy relies on the relative entropy argument, which is widely used to analyze a hydrodynamic limit of the kinetic equation \cite{G-L-T, L, Y}, together with some entropy inequalities. In order to establish the hydrodynamic limit, we first need to show the existence of weak and strong solutions to the systems \eqref{B-1} and \eqref{B-2} at least locally in time, and estimate the error between them by means of relative entropy method. However, in the current work, we focus on the relative entropy estimates by assuming the existence of weak solutions to the kinetic-fluid system \eqref{B-1}. Since local existence theories for the types of balance laws have been well developed, the local-in-time existence and uniqueness of solutions for the limiting system \eqref{B-2} can be obtained under suitable assumption on the viscosity coefficient $\nu$, see \cite{M84} for the readers who are interested in it.  We also refer to \cite{C} where the global-in-time existence of a unique strong solution under suitable smallness and regularity assumptions on the initial data is discussed. This yields that once we obtain the existence of weak solutions to the system \eqref{B-1}, our analysis becomes fully rigorous. We emphasize that the asymptotic regime we considered for the system \eqref{B-1} has not been studied so far, to the best of authors' knowledge. 

%%%%%%%%%%%%%%%%%%%%%%%%%%%%%%%%%%%%%%%%%%%%%%%%%%%%%%
%
%
%. \subsection{Formal derivation of the asymptotic system}
%
%%%%%%%%%%%%%%%%%%%%%%%%%%%%%%%%%%%%%%%%%%%%%%%%%%%%%%
\subsection{Formal derivation of the asymptotic system}
The right-hand side of the kinetic equation in \eqref{B-1} reads
\[ 
\nabla_\xi \cdot [\nabla_\xi f^\varepsilon - (u^\varepsilon - \xi)f^\varepsilon] = \nabla_\xi \cdot \left(M_{f^\varepsilon} \nabla_\xi \left( \frac{f^\varepsilon}{M_{f^\varepsilon}} \right)\right),
\]
where $M_{f^\varepsilon} = M_{f^\varepsilon}(x,\xi,t)$ is the Maxwellian given by
\[
M_{f^\varepsilon}(x,\xi,t) := \frac{1}{(2\pi)^{d/2}} e^{-\frac{|\xi - u^\varepsilon(x,t)|^2}{2}}. 
\]
Thus, once we have $\rho^\e \to \rho$ and $u^\varepsilon \to u$ as $\e \to 0$, we find
\[
f^\varepsilon \to M_{\rho,u}:= \frac{\rho(x,t)}{(2\pi)^{d/2}} e^{-\frac{|\xi-u(x,t)|^2}{2}} \quad \mbox{as} \quad \varepsilon \to 0. 
\]
This enables us to close the momentum equations derived from the kinetic equation \eqref{B-1} and the limiting solutions $(\rho, u, n, v)$, where $(n^\e, v^\e) \to (n,v)$ as $\e \to 0$, satisfy the two-phase fluid system presented in \eqref{B-2}. See \cite{C-C-K, C} for more detailed discussion. 

Without loss of generality, throughout this paper, we may assume $\|f_0^\varepsilon\|_{L^1} = 1$ for all $\e > 0$. This together with the conservation of mass yields $\|f^\varepsilon(\cdot,\cdot,t)\|_{L^1} = \|f_0^\varepsilon\|_{L^1}$ for $\e>0$ and $t\geq 0$. In fact, we only need to assume $\|f_0^\varepsilon\|_{L^1} \leq C$ for all $\e > 0$, where $C > 0$ is independent of $\e$. 

%%%%%%%%%%%%%%%%%%%%%%%%%%%%%%%%%%%%%%%%%%%%%%%%%%%%%%
%
%
% \subsection{Main result}
%
%%%%%%%%%%%%%%%%%%%%%%%%%%%%%%%%%%%%%%%%%%%%%%%%%%%%%%
\subsection{Main result}
For the hydrodynamic limit, we will use the following notion of weak solutions to the system \eqref{A-1} and strong solutions to the system \eqref{B-2}. 
\begin{definition}\label{D2.1}
For $T \in (0,\infty)$, we say a triplet $(f, n, v)$ is a weak solution to the system \eqref{A-1} if the following conditions are satisfied:
\begin{enumerate}
\item
$f \in L^\infty(0,T;(L_+^1 \cap L^\infty)(\bbr^d \times \bbr^d)), \quad (|x|^2 + |\xi|^2) f \in L^\infty(0,T;L^1(\bbr^d \times \bbr^d))$.
\item
$n-n_\infty \in L^\infty(0,T;(L_+^1 \cap L^\gamma)(\bbr^d)), \quad n|v|^2 \in L^\infty(0,T;L^1(\R^d)), \quad  \sqrt{\nu(n)}\nabla_x v \in L^2(0,T;L^2(\bbr^d))$.
\item
$(f,n,v)$ satisfies \eqref{A-1} in a distributional sense.
%\begin{enumerate}
%\item
%For all $\phi \in \mathcal{C}_c^1(\bbr^d \times \bbr^d \times [0,T))$ with $\phi(\cdot,\cdot,T)=0$,
%\begin{align*}
%-\int_0^t& \int_{\bbr^d \times \bbr^d} f \left(\partial_t \phi + \xi \cdot \nabla_x \phi + \lt((v-\xi)+ (u -\xi)\rt) \cdot \nabla_\xi \phi\right)  dx d\xi ds\\
%& + \int_0^t \int_{\bbr^d \times \bbr^d}\nabla_\xi f \cdot \nabla_\xi \phi \,dx d\xi  ds = \int_{\bbr^d \times \bbr^d} f_0 \phi(\cdot, \cdot, 0)\, dx d\xi .\\
%\end{align*}
%
%\item
%For all $\psi \in \mathcal{C}_c^1(\bbr^d \times [0,T])$,
%\[
%\int_0^t \int_{\bbr^d} n(x,t) \psi(x,t) \,dx - \int_0^t \int_{\bbr^d} n(\partial_s \psi + v \cdot \nabla_x \psi) \,dxds = \int_{\bbr^d} n_0 \psi(\cdot, 0)\,dx.  
%\]
%\item
%For all $\varphi \in \Big(\mathcal{C}_c^1(\bbr^d \times [0,T])\Big)^d$,
%\begin{align*}
%\int_{\bbr^d}& (nv)(x,t) \cdot \varphi(x,t) \,dx - \int_0^t \int_{\bbr^d} (nv \cdot \partial_t \varphi + nv \otimes v : \nabla_x \varphi + p \nabla_x \cdot \varphi)\,dxds\\
%& \quad + 2\int_0^t \int_{\R^d} \nu(n) \nabla_x \varphi : \D v\,dxds \\
%& = -\int_0^t \int_{\bbr^d} \rho(v-\xi)\cdot \varphi \, dxds + \int_{\bbr^d} (n_0 v_0)(x) \cdot \varphi(x, 0)\,dx.
%\end{align*}
%\end{enumerate}
\end{enumerate}
\end{definition}

\begin{definition}\label{D2.2} Let $s >d/2+2$. For $T \in (0,\infty)$, $(\rho,u,n,v)$ is called a strong solution of \eqref{B-2} on the time interval $[0,T]$ if it satisfies the system \eqref{B-2} in the sense of distributions, and it also satisfies the following regularity conditions:
\[
(\rho, u, n, v) \in \mathcal{C}([0,T];H^s(\R^d)) \times \mathcal{C}([0,T]; H^s(\R^d)) \times \mathcal{C}([0,T];H^s(\R^d)) \times \mathcal{C}([0,T];H^s(\R^d)).
\]
\end{definition}
\begin{remark} As discussed before, the local-in-time existence and uniqueness of strong solutions in the sense of Definition \ref{D2.2} can be obtained under suitable assumptions on the initial data and the viscosity coefficient $\nu$.
\end{remark}

We now state our main result on the hydrodynamic limit of \eqref{B-1}. 
\begin{theorem}\label{T2.1}
Let $d > 2$, $\gamma \in [1,2]$, and $(f^\e, n^\e, v^\e)$ be a weak solution to the system \eqref{B-1} up to time $T>0$ in the sense of Definition \ref{D2.1} with the initial data $(f^\e_0, n^\e_0, v^\e_0)$ satisfying 
\begin{align}\label{D2-1.1}
\begin{aligned}
&f^\e_0\in (L_+^1 \cap L^\infty)(\bbr^d \times \bbr^d),  \quad  (|x|^2 + |\xi|^2)f^\e_0 \in L^1(\bbr^d \times \bbr^d),\cr
& n^\e_0-n_\infty\in (L_+^1 \cap L^\gamma)(\bbr^d), \quad n^\e_0|v^\e_0|^2 \in L^1(\R^d), \quad \mbox{and} \quad  \sqrt{\nu(n^\e_0)}\nabla_x v^\e_0 \in L^2(\bbr^d).
\end{aligned}
\end{align}
Let $s > d/2 + 2$ and $(\rho,u,n,v)$ be a strong solution to the system \eqref{B-2} up to time $T>0$ in the sense of Definition \ref{D2.2} with the initial data $(\rho_0,u_0,n_0,v_0)$ satisfying
\[
\rho_0 > 0 \mbox{ in } \R^d, \quad  \inf_{x \in \R^d} n_0(x) > 0, \quad \mbox{and} \quad (\rho_0, u_0, n_0, v_0) \in H^s(\R^d) \times H^s(\R^d) \times H^s(\R^d) \times H^s(\R^d).
\]
Suppose that the viscosity coefficient $\nu \in \mc^1(\R_+)$ is Lipschitz continuous satisfying 
\bq\label{ass_nu}
|\nu(x) - \nu(y)| \leq \nu_{\mbox{\tiny Lip}}|x-y|, \quad \nu(x) \geq \nu_* > 0, \quad \mbox{and} \quad x^2 \leq c_0 \nu(x)p(x), 
\eq
for all $x,y \in \R_+$, where $\nu_{\mbox{\tiny Lip}}$, $\nu_*$, and $c_0$ are positive constants. Moreover, the initial data $(f^\e_0, n^\e_0, v^\e_0)$ and $(\rho_0,u_0,n_0,v_0)$ are well-prepared such that 
\begin{itemize}
\item[{\bf (H1):}] 
$$\begin{aligned}
&\quad \int_{\bbr^d}\lt( \int_{\R^d} f_0^\varepsilon \left(1 +  \log f_0^\varepsilon  +\frac{1}{2}(|\xi|^2+|x|^2) \right) d\xi + \frac{1}{2} n_0^\varepsilon |v_0^\varepsilon|^2 + n_0^\e\int_{n_\infty}^{n_0^\e} \frac{p(z)}{z^2}\,dz - \frac{p(n_\infty)}{n_\infty}(n_0^\e - n_\infty) \rt) dx \cr
&\qquad - \int_{\R^d} \lt(\rho_0 \lt(1 + \log \rho_0 + \frac{1}{2}(|u_0|^2 + |x|^2)\rt) + \frac12 n_0|v_0|^2 + n_0\int_{n_\infty}^{n_0} \frac{p(z)}{z^2}\,dz - \frac{p(n_\infty)}{n_\infty}(n_0 - n_\infty)\rt)dx \cr
&\quad = \mathcal{O}(\sqrt\e),
\end{aligned}$$
\item[{\bf (H2):}] 
$$\begin{aligned}
&\int_{\R^d} \rho_0^\e |u_0^\e - u_0|^2\,dx + \int_{\R^d} n_0^\e|v_0^\e - v_0|^2\,dx \cr
& \qquad + \int_{\R^d}\int_{\rho_0}^{\rho_0^\e} \frac{\rho_0^\varepsilon - z}{z}\,dzdx + \int_{\R^d} \lt(n_0^\e\int_{n_0}^{n_0^\e} \frac{p(z)}{z^2}\,dz - \frac{p(n_0)}{n_0}(n_0^\e - n_0)\rt)dx \cr
&\quad = \mathcal{O}(\sqrt\e).
\end{aligned}$$
\end{itemize}
Then we have 
\begin{align}\label{res_conv}
\begin{aligned}
&\int_{\R^d} \rho^\e|u^\e - u|^2\,dx + \int_{\R^d} (n^\e)|v^\e - v|^2\,dx + \int_{\R^d}\int_{\rho}^{\rho^\e} \frac{\rho^\varepsilon - z}{z}\,dzdx \cr
&\quad + \int_{\R^d} \lt(n^\e\int_n^{n^\e} \frac{p(z)}{z^2}\,dz - \frac{p(n)}{n}(n^\e - n)\rt)dx\cr
&\quad + \int_0^t \int_{\R^d} \nu(n^\e)|\D (v-v^\e)|^2\,dxds +\int_0^t \int_{\bbr^d} \rho^\varepsilon|( u^\varepsilon - v^\varepsilon) - (u-v)|^2\, dxds\\
& \qquad \le C\sqrt{\varepsilon},
\end{aligned}
\end{align}
where $C$ is a positive constant independent of $\e$.

As a consequence, we have the following strong convergences of weak solutions $(f^\varepsilon, n^\varepsilon, v^\varepsilon)$ to the system \eqref{B-1} towards the strong solutions $(\rho,u,n,v)$ to the system \eqref{B-2}:
$$\begin{aligned}
f^\varepsilon &\to M_{\rho,u} \mbox{ a.e. and strongly in }L_{loc}^1(0,T; L^1(\bbr^d \times \bbr^d)),\cr
(\rho^\e, n^\e) &\to (\rho, n) \mbox{ a.e. and strongly in }L_{loc}^1(0,T; L^1(\bbr^d)) \times L_{loc}^1(0,T; L_{loc}^p(\bbr^d))\quad \mbox{for any  } p \in [1,\gamma],\cr
(\rho^\e u^\e, n^\e v^\e) &\to (\rho u, nv) \mbox{ a.e. and strongly in }L_{loc}^1(0,T; L^1(\bbr^d)) \times L_{loc}^1(0,T; L_{loc}^1(\bbr^d)), \quad \mbox{and} \quad \cr
(\rho^\e |u^\e|^2, n^\e |v^\e|^2) &\to (\rho |u|^2, n|v|^2) \mbox{ a.e. and strongly in }L_{loc}^1(0,T; L^1(\bbr^d)) \times L_{loc}^1(0,T; L_{loc}^1(\bbr^d)),
\end{aligned}$$
as $\e \to 0$.
\end{theorem}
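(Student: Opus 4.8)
The plan is to carry out a relative entropy (modulated free energy) argument, comparing the weak solution $(f^\e,n^\e,v^\e)$ of \eqref{B-1} with the strong solution $(\rho,u,n,v)$ of \eqref{B-2}, and to show that a suitable entropy functional satisfies a Gr\"onwall inequality with inhomogeneity $\mathcal{O}(\sqrt\e)$. I would work with the total relative entropy
\[
\mathcal{H}^\e(t) := \int_{\R^d\times\R^d} f^\e \log\frac{f^\e}{M_{\rho,u}}\,d\xi\,dx + \frac12\int_{\R^d} n^\e|v^\e-v|^2\,dx + \int_{\R^d}\left(n^\e\int_n^{n^\e}\frac{p(z)}{z^2}\,dz - \frac{p(n)}{n}(n^\e-n)\right)dx,
\]
i.e. the kinetic relative entropy of $f^\e$ against the limiting Maxwellian plus the modulated energy of the fluid phase. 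A direct computation using $\log M_{\rho,u}=\log\rho-\tfrac{d}{2}\log(2\pi)-\tfrac12|\xi-u|^2$ and conservation of total mass shows
\[
\int_{\R^d\times\R^d} f^\e\log\frac{f^\e}{M_{\rho,u}}\,d\xi\,dx = \int_{\R^d\times\R^d} f^\e\log\frac{f^\e}{M_{\rho^\e,u^\e}}\,d\xi\,dx + \frac12\int_{\R^d}\rho^\e|u^\e-u|^2\,dx + \int_{\R^d}\int_\rho^{\rho^\e}\frac{\rho^\e-z}{z}\,dz\,dx,
\]
in which every summand is nonnegative; hence a bound on $\mathcal H^\e$ immediately dominates the left-hand side of \eqref{res_conv}.

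First I would derive the free energy inequality for the system \eqref{B-1} itself. Rewriting the Fokker--Planck operator as $\tfrac1\e\nabla_\xi\cdot(M_{f^\e}\nabla_\xi(f^\e/M_{f^\e}))$, testing the kinetic equation against $1+\log f^\e+\tfrac12(|\xi|^2+|x|^2)$ and the fluid momentum equation against $v^\e$, and adding, the drag couplings combine to yield a dissipation identity of the form
\[
\frac{d}{dt}\,\mathcal{F}[f^\e,n^\e,v^\e] + \frac1\e\int_{\R^d\times\R^d} f^\e\left|\nabla_\xi\log\frac{f^\e}{M_{f^\e}}\right|^2 d\xi\,dx + \int_{\R^d\times\R^d}|v^\e-\xi|^2 f^\e\,d\xi\,dx + 2\int_{\R^d}\nu(n^\e)|\D v^\e|^2\,dx \le 0,
\]
where $\mathcal F$ is the total free energy (the first bracket in {\bf (H1)}). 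The decisive consequence is that the Fokker--Planck dissipation, carrying the factor $1/\e$, is bounded uniformly in $\e$, so that after time integration it is $\mathcal O(\e)$; this controls the deviation of $f^\e$ from its own local Maxwellian $M_{f^\e}$ and is exactly what renders the moment-closure errors $\mathcal O(\sqrt\e)$.

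Next I would differentiate $\mathcal H^\e$ in time. Using the weak formulation of \eqref{B-1} with test functions assembled from the strong solution (roughly $\log\rho$, $\xi\cdot u$ and $\tfrac12|u|^2$ for the kinetic equation, and $v$ together with the pressure potential for the fluid equations) and invoking \eqref{B-2} for $(\rho,u,n,v)$, after the usual cancellations I expect to reach, for a.e. $t\in[0,T]$,
\[
\mathcal H^\e(t) + \int_0^t\!\!\int_{\R^d}\nu(n^\e)|\D(v-v^\e)|^2\,dx\,ds + \int_0^t\!\!\int_{\R^d}\rho^\e\big|(u^\e-v^\e)-(u-v)\big|^2\,dx\,ds \le \mathcal H^\e(0) + C\int_0^t \mathcal H^\e(s)\,ds + R^\e,
\]
the last two dissipation terms arising from the viscosity and from the two drag couplings. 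Here the quadratic-in-$\mathcal H^\e$ remainders are absorbed into the Gr\"onwall integral using the uniform bounds on $(\rho,u,n,v)$ and their first derivatives provided by $H^s$, $s>d/2+2$, and Sobolev embedding, while $R^\e$ collects the moment-closure errors measuring $f^\e-M_{f^\e}$, which are $\mathcal O(\sqrt\e)$ by the free energy estimate; the initial term $\mathcal H^\e(0)$ is $\mathcal O(\sqrt\e)$ by {\bf (H1)}--{\bf (H2)}.

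The main obstacle is the density-dependent viscosity, which has no counterpart in the constant-viscosity analyses: modulating $2\nabla_x\cdot(\nu(n^\e)\D v^\e)$ generates cross terms such as $\int(\nu(n^\e)-\nu(n))\,\D v:\D(v^\e-v)\,dx$ and $\int(\nu(n^\e)-\nu(n))|\D v|^2\,dx$. To handle these I would use the Lipschitz bound $|\nu(n^\e)-\nu(n)|\le\nu_{\mbox{\tiny Lip}}|n^\e-n|$ and $\nu\ge\nu_*$ from \eqref{ass_nu}, split off via Young's inequality a small fraction of the viscous dissipation $\nu(n^\e)|\D(v-v^\e)|^2$, and then dominate the residual factor $|n^\e-n|^2$ by the relative internal energy of $n$ already present in $\mathcal H^\e$; the structural assumption $x^2\le c_0\,\nu(x)p(x)$ is precisely what makes this last comparison possible in the large-density regime, while $\gamma\in[1,2]$ and $d>2$ are used to close the pressure and nonlinear estimates. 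Once all remainders are absorbed, Gr\"onwall's lemma gives $\mathcal H^\e(t)\le C\sqrt\e$ on $[0,T]$, which is \eqref{res_conv}. Finally, the stated convergences follow from this bound: the relative internal energies control $\rho^\e-\rho$ and $n^\e-n$ in $L^1_{loc}$ (and a.e.) through a Csisz\'ar--Kullback--Pinsker type inequality, the modulated kinetic energies upgrade these to the convergence of $\rho^\e u^\e,\,n^\e v^\e$ and $\rho^\e|u^\e|^2,\,n^\e|v^\e|^2$, and the vanishing of $\int f^\e\log(f^\e/M_{\rho,u})$ forces $f^\e\to M_{\rho,u}$ strongly in $L^1_{loc}$ and a.e.
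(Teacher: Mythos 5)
Your proposal follows essentially the same route as the paper: a relative entropy/Gr\"onwall argument in which the $\varepsilon^{-1}$ Fokker--Planck dissipation from the free-energy inequality yields the $\mathcal{O}(\sqrt{\varepsilon})$ moment-closure error, and the density-dependent viscosity cross terms are absorbed via the Lipschitz bound, the lower bound $\nu \ge \nu_*$, and the structural assumption $x^2 \le c_0\,\nu(x)p(x)$. The only cosmetic difference is that you carry the kinetic relative entropy $\int f^\varepsilon \log(f^\varepsilon/M_{\rho,u})$ inside the modulated functional, whereas the paper works with the purely macroscopic relative entropy $\mathcal{H}(U^\varepsilon|U)$ and imports the kinetic contribution through the inequality $\int_{\R^d} E(U^\varepsilon)\,dx \le \mathcal{F}(f^\varepsilon,n^\varepsilon,v^\varepsilon)$ in the estimate of its term $K_2$.
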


\begin{remark} Since $n^\e$ is not integrable in $\R^d$, we only provide the convergences related to the compressible Navier-Stokes system in \eqref{B-1} locally in $\R^d$. 
\end{remark}

\begin{remark} The technical condition $\gamma \in [1,2]$ is also used in \cite{M-V2}, where the asymptotic analysis of the Vlasov-Fokker-Planck equations coupled with the compressible Navier-Stokes equaiton with the constant viscosity coefficient in a bounded domain under strong drag force and strong Brownian motion is studied. 
\end{remark}

\begin{remark} By Young's inequality, we find
\[
r^{2 - \gamma} \leq (\gamma-1) + (2-\gamma)r \quad \mbox{for} \quad \gamma \in [1,2].
\]
This yields that $\nu(r) = 1+r$ satisfies the assumption \eqref{ass_nu} with $\nu_{\mbox{\tiny Lip}} = \nu_* = 1$ and $c_0 = \max(\gamma-1,2-\gamma) > 0$. It looks that the assumptions on $\nu$ \eqref{ass_nu} do not allow us to consider the constant viscosity coefficient. However, if $\nu \equiv \nu_*$ for an example, the third assumption in \eqref{ass_nu} is not needed in our estimate. To be more specific, the term $K_7$ in Section \ref{sec:3} vanishes. Thus our strategy can be directly applied to the constant viscosity coefficient case. 
\end{remark}

\begin{remark} Recently, a non-trivial relative entropy for compressible Navier-Stokes equations with density-dependent viscosities is introduced and some applications, for examples, weak-strong uniqueness, inviscid limit or low Mach number limit, are discussed in \cite{BNV16, BNV17, FJN12}. 
\end{remark}

%Especially, the existence of weak solutions and hydrodynamic limit to the system \eqref{A-1} in the absence of local alignment was discussed in \cite{M-V, M-V2}. Despite of a wide range of research on this topic, there is no known results about the existence theory of weak solutions to Vlasov-Fokker-Planck/compressible Navier-Stokes system with local alignment, nor rigorous derivation of hydrodynamic limit, as far as the authors know.
%
%In this paper, to get a glimpse of dynamics of weak solutions to \eqref{A-1}, we focus on the hydrodynamic limit of system \eqref{A-1}. We study system \eqref{A-1} in the regime of strong local alignment and diffusive forces, which was studied when the fluid was chosen to be incompressible Navier-Stokes fluid \cite{C-C-K}. In this regime, we show that the kinetic density $f$ is close to a Maxwellian and the dynamics of particles can be described by a compressible Euler equation coupled with compressible Navier-Stokes system \cite{C}. To fulfill this, we employ a method of relative entropy, which was widely used to derive a hydrodynamic limit for the kinetic equation \cite{G-L-T, L, Y}. 

The next section is devoted to derive an evolution equation for the integrated relative entropy. Finally, in Section \ref{sec:3}, we provide the details of proof of Theorem \ref{T2.1}.

Before closing this section, we introduce several notations used throughout the paper. For a function $f = f(x,\xi)$ defined on $(x,\xi) \in \bbr^d \times \bbr^d$, $u = u(x,t)$ on $x \in \bbr^d$ and $p \in [1,\infty)$, we denote $\|f\|_{L^p}$ and $\|u\|_{L^p}$ by the usual $L^p(\bbr^d \times \bbr^d)$- and $L^p(\bbr^d)$-norm, respectively. $H^k(\R^d)$ is the $k$-th order $L^2$-Sobolev space. We also denote by $C$ a generic positive constant which may differ from line to line; $C = C(\alpha, \beta, \cdots)$ represents the positive constants depending on $\alpha, \beta, \cdots$.

%%%%%%%%%%%%%%%%%%%%%%%%%%%%%%%%%%%%%%%%%%%%%%%%%%%%%%%%
%
%
% \section{Relative entropy estimate}
%
%%%%%%%%%%%%%%%%%%%%%%%%%%%%%%%%%%%%%%%%%%%%%%%%%%%%%%%%%%%%
\section{Relative entropy estimate}\label{sec:2} \setcounter{equation}{0}
In this section, we present some entropy inequalities and relative entropy estimates which will crucially be used for the hydrodynamic limit of the system \eqref{B-1}.

\subsection{Entropy inequalities}
In this part, we show that the weak solutions to the system \eqref{B-1} in the sense of Definition \ref{D2.1} satisfy several entropy inequalities which are uniform in $\varepsilon$. Similarly to \cite[Section 5]{C-C-K}, let us set
\begin{align*}
&\mathcal{F}(f^\varepsilon, n^\varepsilon, v^\varepsilon) := \int_{\bbr^d \times \bbr^d } f^\varepsilon \left( \log f^\varepsilon  +\frac{|\xi|^2}{2} \right) dxd\xi + \int_{\bbr^d} \frac{1}{2} n^\varepsilon |v^\varepsilon|^2 \,dx + \int_{\bbr^d} H(n^\varepsilon) \,dx,\\
&D_1(f^\varepsilon) := \int_{\bbr^d \times \bbr^d} \frac{1}{f^\varepsilon} |\nabla_\xi f^\varepsilon - (u^\varepsilon -\xi)f^\varepsilon|^2 \,dxd\xi,\\
&D_2(f^\varepsilon, n^\e, v^\varepsilon) := \int_{\bbr^d \times \bbr^d} |v^\varepsilon -\xi|^2 f^\varepsilon \,dxd\xi + \int_{\bbr^d} \nu(n^\e) |\D v^\varepsilon|^2 \,dx,
\end{align*}
where $H = H(n)$ is given by
\[
H(n) := K(n) - K'(n_\infty)(n-n_\infty),  \quad K(n) := n \int_{n_\infty}^n \frac{p(z)}{z^2}\,dz.
\]
%\[
%H(n) := \left\{ \begin{array}{ll}
%n \log n & \textrm{if $\gamma=1$,}\\[2mm]
%\displaystyle  \frac{n^\gamma}{\gamma -1} & \textrm{if $\gamma>1$}
%  \end{array} \right.
%\]
Then we can easily find 
\begin{equation}\label{A-2}
\mathcal{F}(f^\varepsilon,n^\varepsilon,v^\varepsilon) + \frac{1}{\varepsilon}\int_0^t D_1(f^\varepsilon)\,ds +\int_0^t D_2(f^\varepsilon, n^\e, v^\varepsilon)\,ds \leq  \mathcal{F}(f^\varepsilon_0,n^\varepsilon_0,v^\varepsilon_0)+  dt \quad \mbox{for} \quad t \geq 0.
\end{equation}
Note that the term $\int_{\R^d \times \R^d} f^\e \log f^\e\,dxd\xi$ has an indefinite sign, however, in the lemma below, we show that it can be controlled by $\mathcal{F}(f^\varepsilon_0,n^\varepsilon_0,v^\varepsilon_0)$ and second spatial moment of $f^\e_0$.
\begin{lemma}\label{L3.1}Let $T>0$ and suppose that $(f^\varepsilon, n^\varepsilon, v^\varepsilon)$ is a weak solution to the system \eqref{B-1} on the time interval $[0,T)$ in the sense of Definition \ref{D2.1} with the initial data $(f^\e_0, n^\e_0, v^\e_0)$ satisfying \eqref{D2-1.1}. Then we have
\begin{align*}
\int_{\R^d \times \R^d} &f^\varepsilon \left( 1 + |\log f^\varepsilon| + \frac{1}{4}(|x|^2 + |\xi|^2) \right) dx d\xi + \frac{1}{2} \int_{\bbr^d} n^\varepsilon |v^\varepsilon|^2 \,dx +  \int_{\bbr^d} H(n^\varepsilon) \,dx\\
& +\frac{1}{\varepsilon} \int_0^t D_1(f^\varepsilon) \,ds + \int_0^t D_2(f^\varepsilon, n^\e, v^\varepsilon) \,ds \le C(T) + \mathcal{O}(\sqrt{\varepsilon}), \qquad t \in (0,T),
\end{align*}
where $C=C(T)$ is a positive constant independent of $\e$. 
\end{lemma}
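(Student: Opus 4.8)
The plan is to start from the energy--dissipation inequality \eqref{A-2} and convert it into the asserted bound by controlling the two quantities that are not manifestly sign-definite: the entropy $\int f^\e\log f^\e$ and the spatial second moment $\int|x|^2 f^\e$, the latter not appearing in $\mathcal F$ at all. First I would record that the potential energy is nonnegative: since $p(z)=z^\gamma$, the map $K$ is convex, $H'(n_\infty)=0$ and $H''=K''=p'(n)/n>0$, so $H(n)\ge 0$. Consequently every term on the left of \eqref{A-2} except $\int f^\e\log f^\e$ is nonnegative, and in particular $\int f^\e\log f^\e \le \mathcal F(f_0^\e,n_0^\e,v_0^\e)+dt$.

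Next I would bound the negative part of the entropy by comparison with a Gaussian. Writing $f^\e|\log f^\e| = f^\e\log f^\e + 2f^\e(\log f^\e)^-$ and taking $h(x,\xi)=e^{-(|x|^2+|\xi|^2)/4}$, the elementary inequality $a\log(a/b)\ge a-b$ (for $a,b>0$) gives, pointwise on $\{f^\e\le 1\}$, $f^\e\log(1/f^\e)\le h + \tfrac14(|x|^2+|\xi|^2)f^\e$; integrating yields $\int f^\e(\log f^\e)^- \le C + \tfrac14\int(|x|^2+|\xi|^2)f^\e$, with $C=\int_{\R^{2d}}h$ a fixed constant. The remaining task is therefore to bound the moments $V(t):=\int|\xi|^2 f^\e$ and $X(t):=\int|x|^2 f^\e$ uniformly in $\e$ and explicitly in $T$.

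This coupling is the heart of the argument and I expect it to be the main obstacle, precisely because $X$ is absent from $\mathcal F$ and is coupled to $V$, which itself is controlled only through the sign-indefinite entropy. I would derive a moment identity by testing the kinetic equation in \eqref{B-1} with $|x|^2$: the $\xi$-divergence and Fokker--Planck terms integrate to zero, leaving $\dot X = 2\int(x\cdot\xi)f^\e \le X+V$. On the other hand, inserting the Gaussian bound into \eqref{A-2} and discarding nonnegative terms gives $\tfrac12 V \le \mathcal F(f_0^\e,n_0^\e,v_0^\e)+dt + C + \tfrac14(X+V)$, hence $V \le C(1+t)+X$ after absorbing $\tfrac14 V$. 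Substituting into the moment identity produces $\dot X \le 2X + C(1+t)$, and Gr\"onwall's lemma then bounds $X(t)$, and therefore $V(t)$, by a constant depending only on $T$ and the initial data. The a priori finiteness of $X,V$ needed to run this argument is supplied by the moment regularity $(|x|^2+|\xi|^2)f^\e\in L^\infty(0,T;L^1)$ in Definition \ref{D2.1}.

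Finally I would assemble the estimate. The term $\tfrac14\int(|x|^2+|\xi|^2)f^\e=\tfrac14(X+V)$ is now bounded, and $\int f^\e\,dxd\xi=1$. For $\int f^\e|\log f^\e|$ I combine the upper bound on $\int f^\e\log f^\e$ from \eqref{A-2} with the Gaussian bound on the negative part. For the fluid energy $\tfrac12\int n^\e|v^\e|^2+\int H(n^\e)$ and for the dissipation $\tfrac1\e\int_0^t D_1 + \int_0^t D_2$, I move $\int f^\e\log f^\e$ to the right-hand side of \eqref{A-2} and use $-\int f^\e\log f^\e \le \int f^\e(\log f^\e)^- \le C(T)$. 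In every case the only $\e$-dependent quantity on the right is $\mathcal F(f_0^\e,n_0^\e,v_0^\e)$, which by hypothesis {\bf (H1)} equals the fixed, finite quantity associated with the limiting initial data up to $\mathcal O(\sqrt\e)$; this is exactly where the $\mathcal O(\sqrt\e)$ in the statement originates, yielding the claimed bound $C(T)+\mathcal O(\sqrt\e)$.
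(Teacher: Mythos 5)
Your proposal is correct and follows essentially the same route as the paper: both start from the energy inequality \eqref{A-2}, control the negative part of $\int f^\e\log f^\e$ by a Gaussian comparison at the price of a quarter of the moments, couple this with the evolution of the spatial second moment $\frac{d}{dt}\int |x|^2 f^\e\,dxd\xi = 2\int (x\cdot\xi)f^\e\,dxd\xi$, and close by Gr\"onwall before invoking {\bf (H1)}. The only (immaterial) difference is that the paper runs Gr\"onwall on the combined quantity $\mathcal{F}+\frac12\int f^\e|x|^2\,dxd\xi$, whereas you first extract $V\le C(1+t)+X$ and apply Gr\"onwall to $X$ alone.
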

\begin{proof}It follows from \eqref{A-2} that
\begin{align*}
\frac{d}{dt}&\left(\mathcal{F}(f^\varepsilon,n^\varepsilon,v^\varepsilon) + \int_{\R^d \times \R^d} f^\varepsilon \frac{|x|^2}{2}\, dx d\xi \right) + \frac{1}{\varepsilon} D_1(f^\varepsilon) + D_2(f^\varepsilon, n^\e, v^\varepsilon)\\
&\le \int_{\R^d \times \R^d} f^\varepsilon (x \cdot \xi) \, dx d\xi  + d  \\
&\le \int_{\R^d \times \R^d}\lt( f^\varepsilon \left( \frac{|\xi|^2}{2} + \frac{|x|^2}{2} \right) + 2f^\varepsilon\log_{-}f^\varepsilon - 2f^\varepsilon\log_{-}f^\varepsilon \rt) dx d\xi  + d,
\end{align*}
where $\log_{-}g(x) := \max\{0, -\log g(x)\}$. On the other hand, we get
\[
2\int_{\R^d \times \R^d} f^\varepsilon \log_{-}f^\varepsilon \,dx d\xi  \le \int_{\R^d \times \R^d} f^\varepsilon \left( \frac{|x|^2}{2} + \frac{|\xi|^2}{2} \right) dx d\xi  + \frac{1}{e}\int_{\R^d \times \R^d} e^{-\frac{|\xi|^2}{4} - \frac{|x|^2}{4}} dx d\xi,
\]
and this implies
\[
\frac{d}{dt} \left(\mathcal{F}(f^\varepsilon,n^\varepsilon,v^\varepsilon) + \int_{\R^d \times \R^d} f^\varepsilon \frac{|x|^2}{2} \,dx d \xi \right)\le 2\left( \mathcal{F}(f^\varepsilon,n^\varepsilon,v^\varepsilon) + \int_{\R^d \times \R^d} f^\varepsilon \frac{|x|^2}{2}\, dx d \xi \right) + C,
\]
Thus we obtain
\[
\left(\mathcal{F}(f^\varepsilon,n^\varepsilon,v^\varepsilon)+ \int_{\R^d \times \R^d} f^\varepsilon \frac{|x|^2}{2} \,dx d \xi  \right) \le \left( \mathcal{F}(f^\varepsilon_0,n^\varepsilon_0,v^\varepsilon_0) + \int_{\R^d \times \R^d} f_0^\varepsilon \frac{|x|^2}{2} \,dx d \xi  \right)e^{C(T)}. 
\]
Finally, we combine the above inequality with \eqref{A-2} and {\bf(H1)} to conclude the desired result.
\end{proof}
We now present an uniform-in-$\e$ estimate of a modified entropy inequality which will significantly be used in later discussion. 
\begin{lemma}\label{L3.2}
Let $T>0$ and suppose that $(f^\varepsilon, n^\varepsilon, v^\varepsilon)$ is a weak solution to the system \eqref{B-1} on the time interval $[0,T)$ in the sense of Definition \ref{D2.1} with the initial data $(f^\e_0, n^\e_0, v^\e_0)$ satisfying \eqref{D2-1.1}. Then we have
\begin{align}
\begin{aligned}\label{A-3}
\mathcal{F}(f^\varepsilon, n^\varepsilon, v^\varepsilon) + \frac{1}{2\varepsilon}& \int_0^t D_1(f^\varepsilon) \,ds  + \int_0^t\int_{\bbr^d} \rho^\varepsilon  |u^\varepsilon - v^\varepsilon|^2 dxds + \int_0^t \int_{\R^d} \nu(n^\e) |\D v^\varepsilon|^2 \,dx ds \\
& \le \mathcal{F}(f^\varepsilon_0, n^\varepsilon_0, v^\varepsilon_0) + C(T)\varepsilon.
\end{aligned}
\end{align}

\end{lemma}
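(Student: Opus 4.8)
The plan is to establish the refined inequality \eqref{A-3} by returning to the basic energy identity \eqref{A-2} and improving the way the drag dissipation $D_2$ is handled. The quantity $\mathcal{F}$ already satisfies \eqref{A-2}, so the only new content is to show that the raw drag term $\int_{\bbr^d \times \bbr^d} |v^\varepsilon - \xi|^2 f^\varepsilon\,dxd\xi$ appearing in $D_2$ can be split so as to produce the cleaner dissipation $\int_{\bbr^d} \rho^\varepsilon |u^\varepsilon - v^\varepsilon|^2\,dx$ together with an error that is absorbable by $\tfrac{1}{\varepsilon} D_1$. The natural device is the algebraic decomposition
\[
\int_{\bbr^d} |v^\varepsilon - \xi|^2 f^\varepsilon\,d\xi = \rho^\varepsilon |v^\varepsilon - u^\varepsilon|^2 + \int_{\bbr^d} |u^\varepsilon - \xi|^2 f^\varepsilon\,d\xi,
\]
which holds exactly because $\rho^\varepsilon u^\varepsilon = \int \xi f^\varepsilon\,d\xi$ makes the cross term vanish upon integration in $\xi$. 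This is just the decomposition of the second moment about its mean velocity $u^\varepsilon$, and it isolates the macroscopic alignment dissipation $\rho^\varepsilon|u^\varepsilon - v^\varepsilon|^2$ that we want to keep on the left-hand side of \eqref{A-3}.

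The second ingredient is to control the leftover fluctuation $\int_{\bbr^d \times \bbr^d} |u^\varepsilon - \xi|^2 f^\varepsilon\,dxd\xi$ by the Fokker–Planck dissipation $D_1(f^\varepsilon)$. Expanding $D_1$, one writes $\nabla_\xi f^\varepsilon - (u^\varepsilon - \xi)f^\varepsilon = f^\varepsilon \nabla_\xi(\log f^\varepsilon + \tfrac12|\xi - u^\varepsilon|^2)$, and an integration-by-parts / Cauchy–Schwarz argument of the standard type shows that the velocity-fluctuation moment is bounded by a constant multiple of $D_1(f^\varepsilon)$ (this is essentially a weighted log-Sobolev or Poincaré-type estimate for the relative density $f^\varepsilon / M_{f^\varepsilon}$). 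Crucially, since we carry the full factor $\tfrac{1}{\varepsilon}$ on $D_1$ in \eqref{A-2}, we only need to borrow an $\mathcal{O}(\varepsilon)$ fraction of it: half of $\tfrac{1}{\varepsilon}D_1$ suffices to dominate $\int |u^\varepsilon - \xi|^2 f^\varepsilon\,dxd\xi$ after multiplying the bound by $\varepsilon$, which is exactly why \eqref{A-3} retains $\tfrac{1}{2\varepsilon} D_1$ and produces the error $C(T)\varepsilon$ on the right.

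Assembling these, I would integrate \eqref{A-2} in time, substitute the moment decomposition into the $D_2$ term, move $\int_0^t \int \rho^\varepsilon|u^\varepsilon - v^\varepsilon|^2\,dxds$ and $\int_0^t\int \nu(n^\varepsilon)|\D v^\varepsilon|^2\,dxds$ to the left, and absorb $\int_0^t\int|u^\varepsilon - \xi|^2 f^\varepsilon\,dxd\xi\,ds$ using half of the $\tfrac1\varepsilon D_1$ reservoir. The constant-in-time nuisance term $dt$ from \eqref{A-2} and the borrowed fluctuation estimate together contribute at most $C(T)\varepsilon$ once the small fraction of $D_1$ is scaled by $\varepsilon$; this is where invoking Lemma \ref{L3.1} to keep all moments and the entropy uniformly bounded on $[0,T)$ closes the argument.

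The step I expect to be the main obstacle is the fluctuation bound $\int_{\bbr^d \times \bbr^d} |u^\varepsilon - \xi|^2 f^\varepsilon\,dxd\xi \le C\, D_1(f^\varepsilon)$, since $f^\varepsilon$ is only a weak solution with low regularity and $u^\varepsilon$ depends nonlocally on $f^\varepsilon$ through its first moment; one must justify the integrations by parts in $\xi$ and handle the possible degeneracy where $\rho^\varepsilon$ or $f^\varepsilon$ vanishes. I would address this by working with the dissipation in its square-root form, exploiting $f^\varepsilon \in L^\infty(0,T;L^\infty)$ from Definition \ref{D2.1} and the uniform second-moment and entropy bounds from Lemma \ref{L3.1} to render all the integrals finite, and approximating by truncation where the weak formulation requires it.
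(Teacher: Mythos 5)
Your starting decomposition $\int_{\R^d}|v^\e-\xi|^2 f^\e\,d\xi = \rho^\e|v^\e-u^\e|^2 + \int_{\R^d}|u^\e-\xi|^2 f^\e\,d\xi$ is correct and is indeed the first step of the argument in \cite[Section 5.1]{C-C-K} to which the paper defers. But the rest of the plan has two genuine problems. First, the bound you lean on, $\int_{\R^d\times\R^d}|u^\e-\xi|^2 f^\e\,dxd\xi \le C\,D_1(f^\e)$, is false: for the local Maxwellian $f^\e = M_{\rho^\e,u^\e}$ one has $D_1(f^\e)=0$ while the fluctuation moment equals $d\int_{\R^d}\rho^\e\,dx = d$. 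The correct relation is the identity
\[
\int_{\R^d\times\R^d}|u^\e-\xi|^2 f^\e\,dxd\xi = d - \int_{\R^d\times\R^d}(u^\e-\xi)\cdot\bigl(\nabla_\xi f^\e - (u^\e-\xi)f^\e\bigr)\,dxd\xi,
\]
obtained by writing $(u^\e-\xi)f^\e = \nabla_\xi f^\e - (\nabla_\xi f^\e - (u^\e-\xi)f^\e)$ and integrating by parts in $\xi$ (the constant $d$ comes from $\|f^\e\|_{L^1}=1$); it gives $\int|u^\e-\xi|^2 f^\e \le D_1(f^\e) + 2d$, an additive constant rather than a multiple of $D_1$.

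Second, and this is the essential gap: you never explain how the $\mathcal{O}(1)$ error $dt$ in \eqref{A-2} is reduced to $C(T)\e$. The fluctuation term sits on the dissipation side with a favorable sign, so there is nothing to ``dominate''; simply dropping it (or bounding it by $D_1$) leaves the right-hand side at $dt$, which is not $\mathcal{O}(\e)$. The point of \eqref{A-3} is that the constant $d$, which originates from the entropy production of the drag operator $\nabla_\xi\cdot((v^\e-\xi)f^\e)$, is \emph{exactly cancelled} by the fluctuation moment through the identity above: one has
\[
d - \int_{\R^d\times\R^d}|u^\e-\xi|^2 f^\e\,dxd\xi = \int_{\R^d\times\R^d}(u^\e-\xi)\cdot\bigl(\nabla_\xi f^\e - (u^\e-\xi)f^\e\bigr)\,dxd\xi \le \frac{1}{2\e}D_1(f^\e) + \frac{\e}{2}\int_{\R^d\times\R^d}|u^\e-\xi|^2 f^\e\,dxd\xi,
\]
and the last term is $\le \frac{\e}{2}(D_1(f^\e)+2d)$. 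Only after this cancellation, combined with the a priori bound $\int_0^t D_1(f^\e)\,ds \le C(T)\e$ supplied by Lemma \ref{L3.1}, does one arrive at $\mathcal{F}(f^\e_0,n^\e_0,v^\e_0) + C(T)\e$ while retaining $\frac{1}{2\e}\int_0^t D_1(f^\e)\,ds$ on the left. Your closing concerns about justifying the integrations by parts for weak solutions are legitimate but secondary to this missing cancellation mechanism.
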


\begin{proof}
Since the proof is almost the same as \cite[Section 5.1]{C-C-K}, we omit its proof here.
\end{proof}

\subsection{Relative entropy estimate}
In this subsection, we provide the relative entropy estimates. For this purpose, we introduce 
\[ 
U = \left( \begin{array}{c} \rho \\ m \\ n \\ w \end{array}\right), \quad A(U) := \left(\begin{array}{cccc} m & 0 & 0 & 0 
\\ (m \otimes m)/\rho & \rho \mathbb{I}_d & 0 & 0  \\
w& 0 & 0 & 0 \\
(w \otimes w)/n & n^\gamma \mathbb{I}_d & 0 & 0\end{array}\right), 
\]
and
\[
F(U) = \left( \begin{array}{c} 0 \\ \rho  (v-u) \\ 0 \\ -\rho  (v-u)  + 2\nabla_x \cdot (\nu(n) \D v) \end{array} \right),
\]
where $\mathbb{I}_d$ denotes the $d \times d$ identity matrix, $m := \rho u$, and $w := nv$, and then we rewrite the system \eqref{B-2} in the form of conservation of laws:
\[
U_t + \nabla_x \cdot A(U) = F(U). 
\]
For notational simplicity, we drop $x$-dependence of differential operators, i.e., $\nabla f := \nabla_x f$ and $\Delta f = \Delta_x f$ for the rest of this paper. The corresponding macroscopic entropy $E(U)$ to above system is given by
\[
E(U) :=  \frac{m^2}{2\rho} + \frac{w^2}{2n} + \rho \log \rho+ H(n), 
\]
and the relative entropy functional $\mathcal{H}$ is defined as
\[ 
\mathcal{H}(V|U) := E(V)-E(U)-DE(U)(V-U), \qquad V = \left( \begin{array}{c} \bar\rho \\ \bar m \\ \bar n \\ \bar w \end{array}\right).
\]
A straightforward computation yields
\[
\mathcal{H}(V|U) = \frac{\bar\rho}{2}|u-\bar{u}|^2 + \frac{\bar{n}}{2} |v-\bar{v}|^2 + P(\bar\rho| \rho) + \tilde{P}(\bar{n}|n), 
\]
where $P(x|y)$ and $\tilde{P}(x|y)$ are relative pressures given by
\[
P(x|y) := x\log x - y\log y + (y-x)(1 +\log y) = \int_{y}^{x} \frac{x - z}{z}\,dz\ge \frac{1}{2} \min \left\{ \frac{1}{x}, \frac{1}{y}\right\} |x-y|^2
\]
and
\[
\tilde{P}(x|y) := \left\{ \begin{array}{ll}
P(x|y) & \textrm{if $\gamma=1$,}\\[2mm]
\displaystyle \frac{1}{\gamma -1} (x^\gamma - y^\gamma) + \frac{\gamma}{\gamma-1}(y-x)y^{\gamma-1} & \textrm{if $\gamma>1$,}
  \end{array} \right.
\]
respectively. Note that 
$$\begin{aligned}
\tilde{P}(x|y) &= K(x)-K(y)-K'(y)(x-y)\cr
&\ge \gamma \min\left\{ x^{\gamma -2}, y^{\gamma-2}\right\} |x-y|^2 \geq \frac{\gamma}{2} \max\{ x^{2-\gamma}, y^{2-\gamma} \}^{-1} |x-y|^2,
\end{aligned}$$
for $\gamma > 1$. Using those newly defined notations, we derive an evolution equation for the relative entropy functional.
\begin{lemma}\label{L3.3}
The relative entropy $\mathcal{H}$ satisfies the following equation:
\begin{align*}
& \int_{\bbr^d} \mathcal{H}(V|U) \,dx + \int_0^t \int_{\bbr^d} \nu(\bar n)|\D(v-\bar v)|^2 \,dxds + \int_0^t \int_{\bbr^d} \bar\rho|( \bar u - \bar v) - (u-v)|^2 \,dxds\\
&\quad =  \int_{\bbr^d}\mathcal{H}(V_0|U_0) \,dx + \int_0^t \int_{\bbr^d} \partial_s E(V) \,dxds + \int_0^t \int_{\R^d} \nu(\bar n) |\D \bar v|^2 \,dx ds + \int_0^t \int_{\bbr^d}\bar\rho | \bar u-  \bar v|^2 \,dxds \\
&\qquad - \int_0^t \int_{\bbr^d} DE(U)(\partial_s V + \nabla \cdot A(V) -F(V)) \,dxds - \int_0^t \int_{\bbr^d} (\nabla DE(U)) : A(V|U) \,dxds\\
& \qquad + \int_0^t\int_{\bbr^d} \left(\frac{ \bar n}{n} \rho - \bar\rho \right)(v- \bar v) (u-v) \,dxds + 2\int_{\bbr^d}\left(\frac{\bar n}{n} - 1\right) \lt(\nabla \cdot (\nu(n) \D v) \rt) \cdot (v - \bar v) \,dx \cr
&\qquad  + 2\int_{\R^d}\lt(\nabla \cdot (\lt(\nu(n) - \nu(\bar n)\rt)\D v) \rt)\cdot (v - \bar v)\,dx,
\end{align*}
where $A:B= \sum_{i=1}^m \sum_{j=1}^n a_{ij} b_{ij}$ for $A = (a_{ij}), B= (b_{ij}) \in \R^{mn}$ and $A(V|U)$ is the relative flux functional defined by
\[
A(V|U) := A(V) - A(U) - DA(U)(V-U). 
\]
\end{lemma}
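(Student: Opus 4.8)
The plan is to start from the relative entropy functional $\mathcal{H}(V|U)$ and differentiate its spatial integral in time, using the structure $E(V) - E(U) - DE(U)(V-U)$. The key observation is that when $U$ is taken to be the \emph{strong} solution of \eqref{B-2} (so $U_t + \nabla\cdot A(U) = F(U)$ holds classically) and $V$ is an arbitrary smooth field, the time derivative of $\int_{\R^d}\mathcal{H}(V|U)\,dx$ splits naturally into three groups of terms. First I would write
\[
\frac{d}{dt}\int_{\R^d}\mathcal{H}(V|U)\,dx = \int_{\R^d}\partial_t E(V)\,dx - \frac{d}{dt}\int_{\R^d}\bigl(DE(U)(V-U)\bigr)\,dx - \frac{d}{dt}\int_{\R^d}E(U)\,dx,
\]
and then handle the two $U$-dependent pieces by expanding the time derivatives of $DE(U)$ and $E(U)$ and substituting the equation $U_t = -\nabla\cdot A(U) + F(U)$. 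The standard algebraic manipulation (the chain rule $\partial_t E(U) = DE(U)\,\partial_t U$ together with the identity $DE(U)\,\nabla\cdot A(U) = \nabla\cdot(\text{entropy flux})$, which integrates to zero under the decay boundary conditions) is what produces the term $-\int DE(U)(\partial_s V + \nabla\cdot A(V) - F(V))\,dx$ and the relative-flux term $-\int (\nabla DE(U)):A(V|U)\,dx$.

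The main work, and the step I expect to be the genuine obstacle, is correctly accounting for the viscous contribution and the cross-coupling between the two phases, which is exactly what distinguishes this density-dependent-viscosity setting from the incompressible computation in \cite{C-C-K}. I would isolate the drag and diffusion parts of $F$ and track how $DE(U)$ pairs against them. Writing $DE(U)$ explicitly in the $(\rho,m,n,w)$ variables, the momentum components contract against $\rho(v-u)$ and against $2\nabla\cdot(\nu(n)\D v)$; the delicate point is that $F(V)$ carries the viscous operator evaluated with coefficient $\nu(\bar n)$ and velocity $\bar v$, whereas the genuine viscous dissipation I want to extract is $\nu(\bar n)|\D(v-\bar v)|^2$. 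To force this square to appear I would add and subtract $\nu(\bar n)\D\bar v$ and integrate by parts, which is the source of the three correction integrals on the right-hand side: the term $2\int(\tfrac{\bar n}{n}-1)(\nabla\cdot(\nu(n)\D v))\cdot(v-\bar v)\,dx$ comes from the mismatch between the density weight $\bar n/n$ produced by $DE(U)$ and the coefficient $n$ in the momentum equation, while $2\int\nabla\cdot((\nu(n)-\nu(\bar n))\D v)\cdot(v-\bar v)\,dx$ records the difference between evaluating the viscosity at $n$ versus $\bar n$.

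For the drag coupling I would similarly manipulate the bilinear terms $\rho(v-u)$ appearing in both momentum equations of \eqref{B-2}; collecting the contributions against the relative velocities $u-\bar u$ and $v-\bar v$ and completing the square yields both the good dissipation $\int_0^t\int \bar\rho|(\bar u-\bar v)-(u-v)|^2\,dxds$ on the left and the remainder $\int_0^t\int(\tfrac{\bar n}{n}\rho - \bar\rho)(v-\bar v)(u-v)\,dxds$ on the right. Finally I would integrate the resulting identity in time from $0$ to $t$, move the genuine dissipation terms $\int_0^t\int\nu(\bar n)|\D(v-\bar v)|^2$ and $\int_0^t\int\bar\rho|(\bar u-\bar v)-(u-v)|^2$ to the left-hand side, and identify the initial data $\int\mathcal{H}(V_0|U_0)\,dx$, arriving at the stated equation. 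The computation is entirely exact at this stage — no inequalities are used — so the challenge is purely bookkeeping: keeping the two-phase structure, the density-weighting factor $\bar n/n$, and the viscosity evaluation points straight throughout the integration by parts.
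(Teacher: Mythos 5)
Your proposal follows essentially the same route as the paper: differentiate $\int\mathcal{H}(V|U)\,dx$, substitute $U_t=-\nabla\cdot A(U)+F(U)$, identify the relative-flux term $-\int(\nabla DE(U)):A(V|U)\,dx$ via the standard entropy-flux cancellation (as in \cite{C-C-K}), and then extract the drag dissipation by completing the square in $D^2E(U)F(U)(V-U)+DE(U)F(V)$ and the viscous dissipation by adding and subtracting $\nu(\bar n)\D\bar v$ and integrating by parts, which is exactly how the paper produces the $\bigl(\tfrac{\bar n}{n}\rho-\bar\rho\bigr)$, $\bigl(\tfrac{\bar n}{n}-1\bigr)$, and $\nu(n)-\nu(\bar n)$ remainder terms. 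The plan is correct and correctly locates the origin of every term in the identity.
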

\begin{proof}
A straightforward computation gives
$$\begin{aligned}
\frac{d}{dt} \int_{\bbr^d} \mathcal{H}(V|U) \,dx & = \int_{\bbr^d} \partial_t E(V) \,dx - \int_{\bbr^d} DE(U)(V_t + \nabla \cdot A(V) -F(V))\, dx\\
& \quad + \int_{\bbr^{d}} D^2 E(U) \nabla \cdot A(U)(V-U) + DE(U)\nabla \cdot A(V) \,dx\\
& \quad - \int_{\bbr^{d}} D^2 E(U) F(U) (V-U) + DE(U)F(V) \,dx\\
& =: \sum_{i=1}^4 I_{i}.
\end{aligned}$$
In order to get the desired result, it suffices to estimate $I_3$ and $I_4$ only. For the estimate of $I_3$, we directly use the idea of \cite[Appendix A]{C-C-K} to get
\[
I_3 = - \int_{\bbr^d} (\nabla DE(U)) : A(V|U)\,dx.
\]
For the estimate of $I_4$, we first notice that
\begin{align*}
D^2 &E(U)F(U)(V-U)\\
& = \left(\begin{array}{cccc}* & -m/\rho^2  & * & 0\\[2mm] 
*  & 1/\rho  & * & 0 \\[2mm]
* & 0 & * & -w/n^2 \\[2mm]
* & 0 & * & 1/n  \end{array} \right) 
\left( \begin{array}{c} 0 \\ \rho (v-u) \\ 0 \\ -\rho (v-u) + 2\nabla \cdot (\nu(n) \D v) \end{array}\right) 
\left(\begin{array}{c} \bar\rho - \rho \\ \bar{m} - m \\ \bar{n} - n \\ \bar{w} - w \end{array}\right)\\
& = -(v-u)\cdot u(\bar\rho - \rho) +  (v-u)\cdot(\bar{m} - m) +\frac{v}{n}\cdot(\rho  (v-u) - 2\nabla \cdot (\nu(n) \D v) ) (\bar{n} - n)\\
& \quad - \frac{1}{n} \lt(\rho (v-u) - 2\nabla \cdot (\nu(n) \D v)\rt) \cdot(\bar{w} - w),
\end{align*}
and
$$\begin{aligned}
DE(U) F(V) &= \bar\rho (\bar{v} - \bar{u}) \cdot u - (\bar\rho  (\bar{v} - \bar{u})  -2\nabla \cdot (\nu(\bar n) \D \bar v)) \cdot v\cr
&=\bar\rho(\bar v - \bar u)\cdot(u-v) + 2\lt(\nabla \cdot (\nu(\bar n) \D \bar v)\rt) \cdot v.
\end{aligned}$$
Combining the above inequalities, we find
\begin{align*}
D^2&E(U)F(U)(V-U) + DE(U)F(V)\\
&=\left( (u-v)\cdot u (\bar\rho - \rho) - (u-v)\cdot(\bar\rho\bar{u} - \rho u) - \frac{\rho v}{n} \cdot (u-v)(\bar{n} - n) + \frac{\rho}{n}(u-v)\cdot((\bar{n})\bar{v} - (n)v) \right)\\
& \quad +  \bar\rho(\bar{v}\cdot u - \bar{u}\cdot u - \bar{v}\cdot v + \bar{u}\cdot v)  -\frac{2}{n} (\bar{n} - n)\lt(\nabla \cdot (\nu(n)\D v)\rt) \cdot v  \cr
&\quad + \frac{2}{n}\lt(\nabla \cdot (\nu(n)\D v)\rt)\cdot (\bar{w} - w) +2\lt( \nabla \cdot (\nu(\bar{n})\D \bar{v})\rt)\cdot v\\
&= \bar{\rho}(u-\bar{u})\cdot(u-v) -\frac{\bar n}{n} \rho(v-\bar{v})\cdot(u-v)\\
& \quad +  \bar\rho(\bar{v}\cdot u - \bar{u}\cdot u - \bar{v}\cdot v + \bar{u} \cdot v)\\
&\quad -\frac2{n} (\bar n - n)(\nabla \cdot (\nu(n)\D v)) \cdot v + \frac2{n} (\nabla \cdot (\nu(n)\D v)) \cdot (\bar w - w) + 2 (\nabla \cdot (\nu(\bar n)\D \bar v))\cdot v \cr
& =: \sum_{i=1}^3 J_{i},
\end{align*}
where $J_2$ can be rewritten as 
$$\begin{aligned}
J_2 &= \bar{\rho}(\bar{v}\cdot u - \bar{u}\cdot u - \bar{v}\cdot v + \bar{u}\cdot v)\\
& = \bar\rho(\bar{v}\cdot u - \bar{u}\cdot u - \bar{v}\cdot v + \bar{u}\cdot v +|u|^2 -u\cdot v + \bar{u}\cdot v -\bar{u}\cdot u) - \bar\rho(|u|^2 -u\cdot v + \bar{u}\cdot v -\bar{u}\cdot u)\\
& = \bar\rho (-2(\bar{u} -\bar{v})\cdot (u-v) -\bar{v}\cdot (u-v) -u\cdot v +|u|^2)- \bar\rho(|u|^2 -u\cdot v + \bar{u}\cdot v -\bar{u}\cdot u)\\
& = \bar\rho (-2(\bar{u} -\bar{v})\cdot (u-v) +|u-v|^2 ) + \bar\rho(-|u-v|^2 -\bar{v}\cdot (u-v) -u\cdot v +|u|^2)\\
& \quad - \bar\rho(|u|^2 -u\cdot v + \bar{u}\cdot v -\bar{u}\cdot u)\\
& = \bar\rho |(\bar{u} - \bar{v}) - (u-v)|^2 - \bar\rho|\bar{u} - \bar{v}|^2 - \bar\rho(u-v)\cdot((u-\bar u) - (v- \bar v)).
\end{aligned}$$
Thus we obtain
\bq\label{est_j}
J_1 + J_2 = \bar\rho|(\bar{u} - \bar{v}) - (u-v)|^2 - \bar\rho|\bar{u} - \bar{v}|^2 + \left( \frac{\bar n}{n}\rho - \bar\rho \right)(v-\bar{v})\cdot(v-u).
\eq
For $J_3$, we estimate
$$\begin{aligned}
J_3 &= -2\lt(\nabla \cdot (\nu(n) \D v)\rt) \cdot (v - \bar v) + 2\lt(\nabla \cdot (\nu(\bar n) \D \bar v)\rt)\cdot v - 2\lt(\frac{\bar n}{n} - 1 \rt)\lt(\nabla \cdot (\nu(n) \D v)\rt) \cdot (v - \bar v)\cr
&=-2\lt(\nabla \cdot (\lt(\nu(n) - \nu(\bar n)\rt)\D v) \rt)\cdot (v - \bar v) -2\lt(\nabla \cdot (\nu(\bar n) \D (v - \bar v)) \rt)\cdot (v - \bar v) \cr
&\quad + 2\lt(\nabla \cdot (\nu(\bar n) \D \bar v)\rt) \cdot \bar v - 2\lt(\frac{\bar n}{n} - 1 \rt)\lt(\nabla \cdot (\nu(n) \D v) \rt)\cdot (v - \bar v),
\end{aligned}$$
which together with \eqref{est_j} gives
\begin{align*}
I_4 &= - \int_{\bbr^d} \bar\rho |(\bar u - \bar v) - (u-v)|^2 \,dx + \int_{\bbr^d} \bar\rho |\bar u - \bar v|^2 dx + \int_{\bbr^d} \left(\frac{\bar n}{n} \rho - \bar\rho \right)(v-\bar v)\cdot (u-v) \,dx\\
& \quad + \int_{\bbr^d} \nu(\bar n)|\D \bar{v}|^2 dx -  \int_{\bbr^d} \nu(\bar n)|\D (v-\bar{v})|^2 dx  \cr
&\quad + 2\int_{\bbr^d}\left(\frac{\bar n}{n} - 1\right) \lt(\nabla \cdot (\nu(n) \D v) \rt) \cdot (v - \bar v) \,dx\cr
&\quad  + 2\int_{\R^d}\lt(\nabla \cdot (\lt(\nu(n) - \nu(\bar n)\rt)\D v) \rt)\cdot (v - \bar v)\,dx.
\end{align*}
This completes the proof.
\end{proof}
%%%%%%%%%%%%%%%%%%%%%%%%%%%%%%%%%%%%%%%%%%%%%%%%%%%%%%%%
%
%
% \section{Proof of Theorem \ref{T2.1}}
%
%%%%%%%%%%%%%%%%%%%%%%%%%%%%%%%%%%%%%%%%%%%%%%%%%%%%%%%%%%%%
\section{Proof of Theorem \ref{T2.1}}\label{sec:3}
In this section, we provide the details of proof of Theorem \ref{T2.1}. Let 
\[
U := \left(\begin{array}{c} \rho \\ \rho u \\ n \\ nv \end{array}\right) \quad \mbox{and} \quad U^\varepsilon := \left(\begin{array}{c} \rho^\varepsilon \\ \rho^\varepsilon u^\varepsilon \\ n^\varepsilon \\ n^\varepsilon v^\varepsilon \end{array}\right), 
\]
where $(f^\varepsilon, n^\varepsilon, v^\varepsilon)$ and $(\rho, u, n,v)$ are weak solutions to the system \eqref{B-1} and a unique strong solution to the system \eqref{B-2}, respectively. Then it follows from Lemma \ref{L3.3} that
\begin{align*}
& \int_{\bbr^d} \mathcal{H}(U^\varepsilon|U)\,dx + \int_0^t \int_{\bbr^d} \nu(n^\e)|\D(v-v^\varepsilon)|^2\,dxds + \int_0^t \int_{\bbr^d} \rho^\varepsilon|( u^\varepsilon - v^\varepsilon) - (u-v)|^2\,dxds\\
&\quad  =  \int_{\bbr^d} \mathcal{H}(U^\varepsilon_0|U_0)\,dx \cr
&\qquad + \int_0^t \int_{\bbr^d} \partial_s E(U^\varepsilon) \,dxds + \int_0^t \int_{\bbr^d} \nu(n^\e)|\D v^\varepsilon|^2 \,dxds + \int_0^t \int_{\bbr^d}\rho^\varepsilon | u^\varepsilon -  v^\varepsilon|^2 \,dxds \\
&\qquad - \int_0^t \int_{\bbr^d} DE(U)(\partial_s U^\varepsilon + \nabla \cdot A(U^\varepsilon) -F(U^\varepsilon)) \,dxds\\
& \qquad - \int_0^t \int_{\bbr^d} (\nabla DE(U)) : A(U^\varepsilon|U) \,dxds\\
& \qquad + \int_0^t\int_{\bbr^d} \left(\frac{ n^\varepsilon}{n} \rho - \rho^\varepsilon \right)(v- v^\varepsilon) \cdot(u-v) \,dxds\\
& \qquad + 2\int_0^t \int_{\bbr^d}\left(\frac{n^\varepsilon - n}{n}\right) \lt(\nabla \cdot (\nu(n) \D v) \rt) \cdot (v - v^\varepsilon) \,dxds\\
& \qquad + 2\int_0^t \int_{\R^d}\lt(\nabla \cdot (\lt(\nu(n) - \nu(n^\e)\rt)\D v) \rt)\cdot (v - v^\e)\,dxds\cr
& =: \sum_{i=1}^7 K_{i}.
\end{align*}
We separately estimate $K_i, i=1,\dots,7$ as follows. \newline

\noindent $\diamond$ (Estimates for $K_1$): It follows from {\bf (H2)} that
\[
K_1 = \mathcal{O}(\sqrt\e).
\]
\noindent $\diamond$ (Estimates for $K_2$): Similar to \cite[Proposition 5.2]{C-C-K}, we estimate 
$$\begin{aligned}
K_2 &=  \int_{\bbr^d} E(U^\varepsilon) \,dx - \mathcal{F}(f^\varepsilon, n^\varepsilon, v^\varepsilon) \\ 
& \quad + \mathcal{F}(f^\varepsilon, n^\varepsilon, v^\varepsilon) + \int_0^t  \int_{\R^d} \nu(n^\e) |\D v^\varepsilon|^2 \,dxds + \int_0^t \int_{\bbr^d}\rho^\varepsilon |u^\varepsilon - v^\varepsilon|^2 \,dxds - \mathcal{F}(f^\e_0, n^\e_0, v^\e_0) \\
&\quad + \mathcal{F}(f^\e_0, n^\e_0, v^\e_0) - \int_{\R^d} E(U_0)\,dx \cr 
& \le C(T)\e + \mathcal{F}(f^\e_0, n^\e_0, v^\e_0)  - \int_{\R^d} E(U_0)\,dx,
\end{aligned}$$
where we used the entropy inequality \eqref{A-3} and the fact that
\[
\int_{\bbr^d} E(U^\varepsilon) \,dx \le \mathcal{F}(f^\varepsilon, n^\varepsilon, v^\varepsilon).
\]
We then use the well-prepared assumption {\bf (H1)} on the initial data to obtain
\[
K_2 \leq C\sqrt\e,
\]
for some $C>0$ independent of $\e$. \newline

\noindent $\diamond$ (Estimates for $K_3$): It follows from \eqref{B-1} that the following holds:
$$\begin{aligned}
&\partial_t \rho^\e + \nabla \cdot (\rho^\e u^\e) = 0,\\
&\partial_t (\rho^\e u^\e) + \nabla \cdot (\rho^\e u^\e \otimes u^\e) + \nabla \rho^\e - \rho^\e(v^\e-u^\e) = \nabla \cdot \lt(\int_{\R^d} (u^\e \otimes u^\e - \xi \otimes \xi + \mathbb{I}_d)f^\e\,d\xi \rt) ,\\
&\partial_t n^\e + \nabla \cdot (n^\e v^\e) = 0,\\
&\partial_t (n^\e v^\e) + \nabla \cdot (n^\e v^\e \otimes v^\e) + \nabla p(n^\e) - 2\nabla \cdot (\nu(n^\e) \D v^\e)  + \rho^\e(v^\e-u^\e) = 0,
\end{aligned}$$
in the sense of distributions. This gives
$$\begin{aligned}
&- \int_0^t \int_{\bbr^d} DE(U)(\partial_s U^\varepsilon + \nabla \cdot A(U^\varepsilon) -F(U^\varepsilon)) \,dxds\cr
&\quad = - \int_0^t \int_{\bbr^d} D_m E(U)\cdot \lt(\nabla \cdot \lt(\int_{\R^d} (u^\e \otimes u^\e - \xi \otimes \xi + \mathbb{I}_d)f^\e\,d\xi \rt) \rt) \,dxds\cr
&\quad = \int_0^t \int_{\bbr^d} \nabla u : \lt(\int_{\R^d} (u^\e \otimes u^\e - \xi \otimes \xi + \mathbb{I}_d)f^\e\,d\xi \rt)  \,dxds
\end{aligned}$$
due to $D_m E(U) = u$. We then follow the proof of \cite[Lemma 4.4]{K-M-T2} to get
\[
K_3 \le C\sqrt{\varepsilon},
\]
where $C =  C(\|\nabla u\|_{L^\infty})$ is a positive constant independent of $\e$.  \newline

\noindent $\diamond$ (Estimates for $K_4$): Note that
\begin{align*}
A(U^\varepsilon|U) &= A(U^\varepsilon)-A(U) - DA(U)(U^\varepsilon-U)\\
&= \left(\begin{array}{cccc}0 & 0 &0 &0 \\ \rho^\varepsilon( u^\varepsilon - u) \otimes (u^\varepsilon - u) & 0 & 0 & 0 \\ 0 & 0 &0 &0 \\
n^\varepsilon(v^\varepsilon - v)\otimes (v^\varepsilon - v) & (\gamma-1)\tilde{P}(n^\varepsilon | n) \mathbb{I}_d & 0 & 0  \end{array}\right).
\end{align*}
This implies
$$\begin{aligned}
\int_{\bbr^d} |A(U^\varepsilon|U)| \,dx &\le  \int_{\bbr^{d}} \rho^\varepsilon|u^\varepsilon - u|^2 + n^\varepsilon |v^\varepsilon-v|^2 + d(\gamma-1)\tilde{P}(n^\varepsilon | n) \,dx\cr
& \le C \int_{\bbr^d} \mathcal{H}(U^\varepsilon|U) \,dx,
\end{aligned}$$
where $C>0$ only depends on $d$ and $\gamma$. Thus we obtain
\[
K_4 \le C \int_0^t \int_{\bbr^d} \mathcal{H}(U^\varepsilon|U) \,dx ds.
\]
$\diamond$ (Estimates for $K_5$): We divide $K_5$ into two terms:
\begin{align*}
K_5 &= \int_0^t \int_{\bbr^d} (\rho-\rho^\varepsilon)(v- v^\varepsilon)\cdot(u-v) \,dxds + \int_0^t \int_{\bbr^d} \rho \left( \frac{ n^\varepsilon - n}{n}\right)(v- v^\varepsilon)\cdot (u-v) \,dx ds\\
& =: K_5^1 + K_5^2.
\end{align*}
For the estimate of $K_5^1$, we use the following elementary inequality
\bq\label{minmax}
1 = \min\left\{x^{-1}, y^{-1} \right\}\max\left\{x, y \right\} \le \min\left\{x^{-1}, y^{-1} \right\} (x+y) \quad \mbox{for} \quad x,y>0,
\eq
to get
\begin{align*}
& \lt|\int_{\bbr^d} (\rho-\rho^\varepsilon)(v- v^\varepsilon)\cdot(u-v) \,dx \rt| \\
& \quad \le \left( \int_{\bbr^d} \min\left\{\frac{1}{\rho^\varepsilon}, \frac{1}{\rho} \right\} (\rho- \rho^\varepsilon)^2\,dx \right)^{1/2} \left(\int_{\bbr^d} (\rho + \rho^\varepsilon)|v- v^\varepsilon|^2 |u-v|^2 \,dx\right)^{1/2}\\
& \quad \le C \left( \int_{\bbr^d} \mathcal{H}(U^\varepsilon|U) \,dx \right)^{1/2}\left(\int_{\bbr^d} (\rho + \rho^\varepsilon)|v- v^\varepsilon|^2 |u-v|^2 \,dx\right)^{1/2}.
\end{align*}
On the other hand, the second term on the above inequality can be estimated as
\begin{align*}
\int_{\bbr^d}& (\rho + \rho^\varepsilon)|v-v^\varepsilon|^2 |u-v|^2 \,dx\\
& \le \|\rho\|_{L^\infty}\|v-v^\varepsilon\|_{L^{p*}}^2 \|u-v\|_{L^d}^2 + 2\int_{\bbr^d} \Big(\rho^\varepsilon|(u-u^\varepsilon) - (v-v^\varepsilon)|^2 + \rho^\varepsilon|u-u^\varepsilon|^2 \Big)|u-v|^2 \,dx\\
&  \le C \|\nabla(v - v^\varepsilon)\|_{L^2}^2 \|u-v\|_{L^d}^2 + 2\|u-v\|_{L^\infty}^2 \left(\int_{\bbr^d} \rho^\varepsilon|(u-u^\varepsilon) - (v-v^\varepsilon)|^2 \,dx + \int_{\bbr^d} \rho^\varepsilon|u-u^\varepsilon|^2 \,dx\right),
\end{align*}
where $1/p^* = 1/2 - 1/d$ and we used Gagliardo-Nirenberg-Sobolev inequality. Note that
\[
\|\rho\|_{L^\infty} \leq C\|\rho\|_{H^s},\quad \|u-v\|_{L^d} \leq \|u-v\|_{L^\infty}^{(d-2)/d}\|u-v\|_{L^2}^{2/d} \leq C\|u-v\|_{H^s},
\]
due to $s > d/2 + 2$, and
$$\begin{aligned}
\frac12\int_{\R^d} |\nabla (v - v^\e)|^2\,dx &\leq \frac12\int_{\R^d} |\nabla (v - v^\e)|^2\,dx + \frac12\int_{\R^d} |\nabla \cdot (v - v^\e)|^2\,dx \cr
&\leq \int_{\R^d} |\D (v - v^\e)|^2\,dx\cr
&\leq \frac{1}{\nu_*}\int_{\R^d} \nu(n^\e)|\D (v - v^\e)|^2\,dx.
\end{aligned}$$
These together with using Young's inequality give
$$\begin{aligned}
K_5^1 &\le C\int_0^t \int_{\bbr^d} \mathcal{H}(U^\varepsilon|U)\,dxds + \frac18\int_0^t \int_{\bbr^d}\nu(n^\e)|\D (v - v^\e)|^2\,dxds  \cr
&\quad + \frac{1}{2}\int_0^t \int_{\bbr^d} \rho^\varepsilon|(u-u^\varepsilon) - (v-v^\varepsilon)|^2 \,dxds,
\end{aligned}$$
where $C = C(\|\rho\|_{L^\infty}, \|u-v\|_{L^\infty(0,T;L^d \cap L^\infty)}, \nu_*)$ is a positive constant. For the term $K_5^2$, we let $n_* := \inf_{x \in \bbr^d} n(x) > 0$ and use the inequality \eqref{minmax} to get
\begin{align*}
&\lt|\int_{\bbr^d} \rho \left( \frac{ n^\varepsilon - n}{n}\right)(v- v^\varepsilon)\cdot(u-v) \,dx\rt|\\
&\quad\le \frac{\|\rho\|_{L^\infty}}{n_*}\int_{\bbr^d} |n^\varepsilon-n| |v^\varepsilon-v| |u-v| \,dx \\
&\quad\le C \left( \int_{\bbr^d} \min\left\{(n^\varepsilon)^{\gamma-2}, n^{\gamma-2} \right\} (n- n^\varepsilon)^2\,dx \right)^{1/2}\cr
&\qquad \qquad \times  \left(\int_{\bbr^d} \lt(n^{2-\gamma} + (n^\varepsilon)^{2-\gamma}\rt)|v- v^\varepsilon|^2 |u-v|^2\,dx\right)^{1/2}\\
&\quad \le C \left( \int_{\bbr^d} \mathcal{H}(U^\varepsilon|U) \,dx \right)^{1/2} \left(\int_{\bbr^d} \lt(n^{2-\gamma} + (n^\varepsilon)^{2-\gamma}\rt)|v- v^\varepsilon|^2  |u-v|^2 \,dx\right)^{1/2},
\end{align*}
where $C = C(\|\rho\|_{L^\infty}, n_*, \gamma)$ is a positive constant. We further estimate
\begin{align*}
\int_{\bbr^d}& \lt(n^{2-\gamma} + (n^\varepsilon)^{2-\gamma}\rt)|v- v^\varepsilon|^2 |u-v|^2 \,dx \\
&\le \|n\|_{L^\infty}^{2-\gamma} \|v-v^\varepsilon\|_{L^{p^*}}^2 \|u-v\|_{L^d}^2 + \int_{\bbr^d} (n^{\varepsilon})^{2-\gamma} |v-v^\varepsilon|^2 |u-v|^2 \,dx\\
&\le C\|n\|_{L^\infty}^{2-\gamma}\|u-v\|_{L^d}^2\|\nabla(v-v^\varepsilon)\|_{L^2}^2 + \int_{\bbr^d} (n^{\varepsilon})^{2-\gamma} |v-v^\varepsilon|^2 |u-v|^2 \,dx.
\end{align*}

\noindent For $\gamma=1$ or $2$, we easily get 
\begin{displaymath}
\int_{\R^d} (n^\varepsilon)^{2-\gamma}|v- v^\varepsilon|^2 |u-v|^2 \,dx \leq \left\{ \begin{array}{ll}
\displaystyle \|u-v\|_{L^\infty}^2\int_{\bbr^d} \mathcal{H}(U^\varepsilon|U) \,dx & \textrm{for $\gamma=1$,}\\[4mm]
\|u-v\|_{L^d}^2\|\nabla(v-v^\varepsilon)\|_{L^2}^2 & \textrm{for $\gamma=2$}.
  \end{array} \right.
\end{displaymath}
For $\gamma \in (1,2)$, we first use Young's inequality to obtain
\begin{align*}
\int_{\bbr^d}& (n^\varepsilon)^{2-\gamma}|v- v^\varepsilon|^2 |u-v|^2 \,dx \\
&\le \int_{\bbr^d} (n^{\varepsilon})^{2-\gamma} |v-v^\varepsilon|^{(4-2\gamma) + (2\gamma -2)} |u-v|^2 \,dx\\
& \le(2-\gamma)\int_{\bbr^d} n^\varepsilon |v-v^\varepsilon|^2 \,dx + (\gamma-1) \int_{\bbr^d}|v-v^\varepsilon|^2 |u-v|^{2/(\gamma-1)} \,dx \\
& \le(2-\gamma)\int_{\bbr^d} n^\varepsilon|v-v^\varepsilon|^2 \,dx + (\gamma-1)\|v-v^\varepsilon\|_{L^{p^*}}^2 \|u-v\|_{L^{\frac{d}{\gamma-1}}}^{\frac{2}{\gamma-1}}\\
& \le C \int_{\bbr^d} \mathcal{H}(U^\varepsilon|U) \,dx + C\int_{\R^d} |\nabla (v - v^\e)|^2\,dx,
\end{align*}
where $C = C(\gamma, \|u-v\|_{L^\infty(0,T;L^d \cap L^\infty)})$ is a positive constant. Note that $d/(\gamma-1) > d > 2$. Using the similar argument as in the estimate of $K_5^1$, we find   
\[ 
K_5^2 \le C \int_0^t \int_{\bbr^d} \mathcal{H}(U^\varepsilon|U)\,dxds + \frac18\int_0^t \int_{\bbr^d}\nu(n^\e)|\D (v - v^\e)|^2\,dxds,
\]
for any $\gamma \in [1,2]$. Thus, we collect the estimates for $K_5^1$ and $K_5^2$ to yield
$$\begin{aligned}
K_5 &\le C\int_0^t \int_{\bbr^d} \mathcal{H}(U^\varepsilon|U) \,dxds + \frac14\int_0^t \int_{\bbr^d}\nu(n^\e)|\D (v - v^\e)|^2\,dxds \cr
&\quad + \frac12\int_0^t \int_{\bbr^d} \rho^\varepsilon|(u-u^\varepsilon) - (v-v^\varepsilon)|^2 \,dxds,\\
\end{aligned}$$
where $C = C(\gamma, n_*, \nu_*, \|\rho\|_{L^\infty}, \|u-v\|_{L^\infty(0,T;L^d \cap L^\infty)})$ is a positive constant. \newline

\noindent $\diamond$ (Estimates for $K_6$): By using almost the same argument as in the estimate of $K_5^2$, we have 
\begin{align*}
&2\lt|\int_{\bbr^d}\left(\frac{n^\varepsilon - n}{n}\right) \lt(\nabla \cdot (\nu(n) \D v)\rt) \cdot (v - v^\varepsilon) \,dx\rt|\\
&\quad \le \frac{2}{n_*} \int_{\bbr^d} |n^\varepsilon-n| |v-v^\varepsilon| |\nabla \cdot (\nu(n) \D v)| \, dx\\
&\quad \le  C \left( \int_{\bbr^d} \min\left\{(n^\varepsilon)^{\gamma-2}, n^{\gamma-2} \right\} (n- n^\varepsilon)^2\,dx \right)^{1/2} \cr
&\qquad \qquad \times \left(\int_{\bbr^d} (n^{2-\gamma} + (n^\varepsilon)^{2-\gamma})|v- v^\varepsilon|^2 |\nabla \cdot (\nu(n) \D v)|^2\,dx\right)^{1/2}\\
&\quad \le C \int_{\bbr^d} \mathcal{H}(U^\varepsilon|U) \,dx + \frac18\int_{\bbr^d}\nu(n^\e)|\D (v - v^\e)|^2\,dx.
\end{align*}
This yields
\[
K_6 \le C \int_0^t \int_{\bbr^d} \mathcal{H}(U^\varepsilon|U) \,dxds + \frac18\int_0^t \int_{\bbr^d}\nu(n^\e)|\D (v - v^\e)|^2\,dxds, 
\]
where $C = C(\gamma, n_*, \nu_*, \|\rho\|_{L^\infty}, \|\nabla \cdot (\nu(n) \D v)\|_{L^\infty(0,T;L^d \cap L^\infty)})$ is a positive constant. We notice that
$$\begin{aligned}
\|\nabla \cdot (\nu(n) \D v)\|_{L^d \cap L^\infty} &\leq C\|\nabla n\|_{L^\infty}\|\D v\|_{L^d \cap L^\infty} + C\|\nabla \D v\|_{L^d \cap L^\infty} \cr
&\leq C\|\nabla n\|_{L^\infty}\|\D v\|_{H^{s-2}} + C\|\nabla \D v\|_{H^{s-2}} \cr
&\leq C(1 + \|\nabla n\|_{H^{s-1}})\|\nabla v\|_{H^{s-1}},
\end{aligned}$$
due to $\nu \in \mc^1(\R_+)$, $d > 2$, and $s-2 > d/2$. \newline

\noindent $\diamond$ (Estimates for $K_7$): Using the integration by parts and symmetry of $\D v$, we find
\[
2\int_{\R^d}\lt(\nabla \cdot (\lt(\nu(n) - \nu(n^\e)\rt)\D v) \rt)\cdot (v - v^\e)\,dx = - \int_{\R^d} \lt(\nu(n) - \nu(n^\e)\rt)\D v: \D (v - v^\e)\,dx.
\]
Then we estimate
$$\begin{aligned}
&\lt|\int_{\R^d} \lt(\nu(n) - \nu(n^\e)\rt)\D v: \D (v - v^\e)\,dx\rt|\cr
&\quad \leq \nu_{\mbox{\tiny Lip}}\|\D v\|_{L^\infty}\int_{\R^d} |\D(v - v^\e)| |n - n^\e|\,dx\cr
&\quad \leq \nu_{\mbox{\tiny Lip}}\|\D v\|_{L^\infty}\lt(\int_{\bbr^d} \min\left\{(n^\varepsilon)^{\gamma-2}, n^{\gamma-2} \right\} (n- n^\varepsilon)^2\,dx \rt)^{1/2}\cr
&\qquad \qquad \times \lt(\int_{\bbr^d} \lt(n^{2-\gamma} + (n^\varepsilon)^{2-\gamma}\rt)|\D(v - v^\e)|^2\,dx \rt)^{1/2}\cr
&\quad \leq C\nu_{\mbox{\tiny Lip}}\|\D v\|_{L^\infty}\lt(\int_{\R^d} \mathcal{H}(U^\varepsilon|U) \,dx \rt)^{1/2}\lt(\int_{\bbr^d} (n^{2-\gamma} + (n^\varepsilon)^{2-\gamma})|\D(v - v^\e)|^2\,dx \rt)^{1/2}.
\end{aligned}$$
On the other hand, by using the assumption on $\nu$ \eqref{ass_nu}, we get $(n^\varepsilon)^{2-\gamma} \leq c_0\nu(n^\e)$, and this gives
$$\begin{aligned}
&\int_{\bbr^d} n^{2-\gamma} + (n^\varepsilon)^{2-\gamma})|\D(v - v^\e)|^2\,dx \cr
&\quad \leq \frac{\|n\|_{L^\infty}^{2-\gamma}}{\nu_*}\int_{\R^d} \nu(n^\e)|\D(v-v^\e)|^2\,dx + c_0 \int_{\R^d} \nu(n^\e)|\D(v - v^\varepsilon)|^2 \,dx.
\end{aligned}$$
This together with using Young's inequality provides
\[
K_7 \leq \frac{1}{8}\int_{\R^d} \nu(n^\e)|\D (v-v^\e)|^2\,dx + C\int_{\R^d} \mathcal{H}(U^\varepsilon|U) \,dx, 
\]
where $C = C(\nu_{\mbox{\tiny Lip}}, \nu_*, c_0, \|\D v\|_{L^\infty}, \|n\|_{L^\infty}, \gamma)$ is a positive constant independent of $\e$. 

By combining all of the above estimates, we have
\begin{align*}
&\int_{\bbr^d} \mathcal{H}(U^\varepsilon|U)\,dx + \frac12 \int_0^t \int_{\R^d} \nu(n^\e)|\D (v-v^\e)|^2\,dxds +\frac{1}{2} \int_0^t \int_{\bbr^d} \rho^\varepsilon|( u^\varepsilon - v^\varepsilon) - (u-v)|^2\,dxds\\
&\quad \le  C\lt(\int_0^t \int_{\bbr^d} \mathcal{H}(U^\varepsilon|U)\,dxds + \sqrt{\varepsilon} \rt),
\end{align*}
where $C = C(\nu_{\mbox{\tiny Lip}}, c_0, \gamma, n_*, \nu_*, \|\rho\|_{L^\infty}, \|u-v\|_{L^d \cap L^\infty}, \|n\|_{L^\infty}, \|\D v\|_{L^\infty}, \|\nabla \cdot (\nu(n) \D v)\|_{L^d \cap L^\infty}, \|\nabla u\|_{L^\infty})$ is a positive constant.
Finally, we apply Gr\"onwall's inequality to the above to conclude the desired result.\newline

We next provide the strong convergence appeared in Theorem \ref{T2.1} by using the relative entropy inequality \eqref{res_conv}. Since the convergence of $\rho^\e$, $\rho^\e u^\e$, and $\rho^\e|u^\e|^2$ can be obtained by the same argument as in \cite{K-M-T2}, we only show the strong convergence of $n^\e$, $n^\e v^\e$ and $n^\e |v^\e|^2$ below. \newline

\noindent $\diamond$ (Convergence of $n^\e$ to $n$): Before proceding, we claim that the following inequality holds: if $x, y>0$ and $0 < y_{min} \le y \le y_{max}<\infty$, then
\begin{equation}\label{T1-1.1}
\tilde{P}(x|y) = K(x)-K(y)-K'(y)(x-y) \ge  \left\{ \begin{array}{ll}
\gamma (2y_{max})^{\gamma-2}|x-y|^2 & \textrm{if $y/2 \le x \le 2y$,}\\[2mm]
\displaystyle \frac{\gamma y_{min}^\gamma}{4(1+y_{min}^\gamma)}(1+x^\gamma) & \textrm{otherwise.}
  \end{array} \right.
\end{equation}
\noindent If $y/2 \le x \le 2y$, we easily find
\begin{align*}
K(x)-K(y)-K'(y)(x-y) &\ge \gamma \min\{x^{\gamma-2},y^{\gamma-2}\} |x-y|^2\\
& \ge \gamma (2y)^{\gamma-2} |x-y|^2 \ge \gamma (2y_{max})^{\gamma-2} |x-y|^2.
\end{align*}
If $x >2y > y \ (> y_{min})$, i.e., $y/x < 1/2$, we get
\begin{align*}
K(x)-K(y)-K'(y)(x-y) &\ge \gamma \min\{x^{\gamma-2},y^{\gamma-2}\} |x-y|^2\\
& = \gamma x^{\gamma-2} |x-y|^2 = \gamma x^\gamma \lt| 1-\frac{y}{x}\rt|^2\\
& \ge \frac{\gamma x^\gamma}{4} = \frac{\gamma}{4}(1+x^\gamma) \lt(1 - \frac{1}{1+x^\gamma} \rt)\\
& \ge \frac{\gamma}{4}(1+x^\gamma)\lt(1 - \frac{1}{1+y_{min}^\gamma} \rt).
\end{align*}
On the other hand, if $x< y/2$, i.e., $x/y < 1/2$, we obtain
\begin{align*}
K(x)-K(y)-K'(y)(x-y) &\ge \gamma y^{\gamma-2} |x-y|^2 = \gamma y^\gamma \lt| 1-\frac{x}{y}\rt|^2\\
& \ge \frac{\gamma y^\gamma}{4} = \frac{\gamma}{4}(1+y^\gamma) \lt(1 - \frac{1}{1+y^\gamma} \rt)\\
& \ge \frac{\gamma}{4}(1+x^\gamma)\lt(1 - \frac{1}{1+y_{min}^\gamma} \rt).
\end{align*}
Thus we have the inequality \eqref{T1-1.1}. We now use that inequality \eqref{T1-1.1} to show the convergence of $n^\e$ to $n$. For $\Omega \subset \bbr^d$ with $|\Omega| < \infty$, we estimate
\begin{align*}
\int_\Omega |n^\e - n|^\gamma \,dx &= \int_{\Omega \cap \{n/2 \le n^\e \le 2n \}} |n^\e -n|^\gamma \,dx + \int_{\Omega \cap\{n/2 \le n^\e \le 2n \}^c} |n^\e - n|^\gamma \,dx\cr
& =: L^\e_1 + L^\e_2. 
\end{align*}
For $L^\e_1$, we find
\begin{align*}
L^\e_1 &\le \left(\int_{\Omega \cap \{n/2 \le n^\e \le 2n \}} \min\{(n^\e)^{\gamma-2}, n^{\gamma-2}\} |n^\e - n|^2 \,dx\right)^{\frac{\gamma}{2}} \left(\int_{\Omega \cap \{n/2 \le n^\e \le 2n \}} \max\{(n^\e)^\gamma, n^\gamma\} \,dx\right)^{\frac{2-\gamma}{2}}\\
&\le C\left(\int_{\Omega \cap \{n/2 \le n^\e \le 2n \}} \mathcal{H}(U^\e|U) \,dx\right)^{\frac{\gamma}{2}} \lt( \Big(2\|n\|_{L^\infty}\Big)^\gamma|\Omega|\rt)^{\frac{2-\gamma}{2}} \longrightarrow 0,
\end{align*}
as $\e \to 0$, where $C = C(\gamma)$ is a positive constant independent of $\e$. For $L^\e_2$, we use \eqref{T1-1.1} to get
\begin{align*}
L^\e_2 &\le \int_{\Omega \cap\{n/2 \le n^\e \le 2n \}^c} \|n\|_{L^\infty}^\gamma \lt|\frac{n^\e}{n} + 1\rt|^\gamma dx\\
&\le \int_{\Omega \cap\{n/2 \le n^\e \le 2n \}^c} (2\|n\|_{L^\infty})^\gamma \left(\left(\frac{n^\e}{n}\right)^\gamma + 1 \right) dx\\
&\le \int_{\Omega \cap\{n/2 \le n^\e \le 2n \}^c} (2\|n\|_{L^\infty})^\gamma \left(\left(\frac{n^\e}{n_*}\right)^\gamma + 1 \right) dx\\
&\le C\int_{\Omega \cap\{n/2 \le n^\e \le 2n \}^c} (1+(n^\e)^\gamma) \,dx\\
&\le C \int_{\Omega \cap\{n/2 \le n^\e \le 2n \}^c} H(U^\e | U)\,dx \longrightarrow 0,
\end{align*}
as $\e \to 0$, where $C = C(\|n\|_{L^\infty}, n_*, \gamma)$ is a positive constant independent of $\e$. Thus we have the convergence $n^\e \to n$ in $L_{loc}^1(0,T;L_{loc}^\gamma(\bbr^d))$, and this together with the integrability condition yields that it also holds in $L_{loc}^1(0,T;L_{loc}^p(\bbr^d))$ with $p \in [1,\gamma]$.\newline

\noindent $\diamond$ (Convergence of $n^\e v^\e$ to $nv$): For $\Omega \subseteq \bbr^d$ with $|\Omega| < \infty$, similarly as before, we estimate
\begin{align*}
\int_\Omega |n^\e v^\e - n v| \,dx & \le \int_\Omega (n^\e| v^\e - v| + |n^\e - n| |v|) \,dx\\
&=:L^\e_3 + L^\e_4,
\end{align*}
where $L^\e_3$ can be bounded by 
\[
L^\e_3 \le \left( \int_\Omega n^\e |v^\e - v|^2 \,dx\right)^{1/2} \left( \int_\Omega n^\e \,dx \right)^{1/2}.
\]
Note that $n^\e$ is locally integrable in $\R^d$, and furthermore, we find
\begin{align*}
 \int_\Omega n^\e \,dx  &= \int_{\Omega \cap \{n/2 \le n^\e \le 2n \}} n^\e \,dx +  \int_{\Omega \cap\{n/2 \le n^\e \le 2n \}^c} n^\e \,dx\\
 & \le 2\|n\|_{L^\infty}|\Omega| + |\Omega|^{\frac{\gamma-1}{\gamma}} \left( \int_{\Omega \cap\{n/2 \le n^\e \le 2n \}^c} (n^\e)^\gamma \,dx\right)^{\frac{1}{\gamma}}\\
 & \le 2\|n\|_{L^\infty}|\Omega| + |\Omega|^{\frac{\gamma-1}{\gamma}} \left( \int_{\Omega \cap\{n/2 \le n^\e \le 2n \}^c} \mathcal{H}(U^\e|U) \,dx\right)^{\frac{1}{\gamma}}.
\end{align*}
This gives $L^\e_3 \to 0$ as $\e \to 0$. For the estimate of $L^\e_4$, we obtain
\[
L^\e_4 \le \|v\|_{L^\infty}|\Omega|^{\frac{\gamma-1}{\gamma}} \lt(\int_\Omega |n^\e - n|^\gamma \,dx\rt)^{1/\gamma} \longrightarrow 0,
\]
as $\e \to 0$. This gives the desired result for the convergence of $n^\e v^\e$.\newline

\noindent $\diamond$ (Convergence of $n^\e|v^\e|^2$ to $n|v|^2$): Note that the following identity holds:
\[
n^\e |v^\e|^2 - n|v|^2 = n^\e |v^\e - v|^2 + 2v \cdot (n^\e v^\e - nv) + |v|^2(n-n^\e). 
\]
This relation together with the previous convergence results yields the desired strong convergence of $n^\e|v^\e|^2$. This completes the proof.

%%%%%%%%%%%%%%%%%%%%%%%%%%%%%%%%%%%%%%%%%%%%%%%%%%%%%%%%%%%%%%%%%%%%%%%%%%%%%%%%%%%%%%%%%%%%%%%%%%%%%%%%%%%%%%%%%%%%%%%%%%%%%%%%%%%%%%%%%%%%%%
%
%
%    Acknowledgments
%
%
%%%%%%%%%%%%%%%%%%%%%%%%%%%%%%%%%%%%%%%%%%%%%%%%%%%%%%%%%%%%%%%%%%%%%%%%%%%%%%%%%%%%%%%%%%%%%%%%%%%%%%%%%%%%%%%%%%%%%%%%%%%%%%%%%%%%%%%%%%%%%%%%

\section*{Acknowledgments}
The work of Y.-P. Choi is supported by National Research Foundation of Korea(NRF) grant funded by the Korea government(MSIP) (No. 2017R1C1B2012918 and 2017R1A4A1014735) and POSCO Science Fellowship of POSCO TJ Park Foundation. The work of J. Jung is supported by the German Research Foundation (DFG) under the project number IRTG2235.

%%%%%%%%%%%%%%%%%%%%%%%%%%%%%%%%%%%%%%%%%%%%
%
%
%. \begin{thebibliography}{10}
%
%
%%%%%%%%%%%%%%%%%%%%%%%%%%%%%%%%%%%%%%%%%%%%%%%%%

\end{document}